\newtheorem{theorem}{Theorem}[section]
\newtheorem{lemma}[theorem]{Lemma}
\newtheorem{cor}[theorem]{Corollary}
\DeclareMathOperator{\sfl}{sf}
\DeclareMathOperator{\diag}{diag}
\DeclareMathOperator{\sgn}{sgn}
\DeclareMathOperator{\Mat}{Mat}
\title{Bifurcation for a Class of Indefinite Elliptic Systems by Comparison Theory for the Spectral Flow via an Index Theorem}
\author{Joanna Janczewska, Melanie M\"ockel and Nils Waterstraat}
\begin{document}
	\date{}
	\maketitle
	\footnotetext[1]{{\bf 2010 Mathematics Subject Classification: Primary 35J57; Secondary 58J30, 35J61}}
	
	\begin{abstract}\noindent
We consider families of strongly indefinite systems of elliptic PDE and investigate bifurcation from a trivial branch of solutions by using the spectral flow. The novelty in our approach is a refined way to apply a comparison principle which is based on an index theorem for a certain class of Fredholm operators that is of independent interest. Finally, we use our findings for a bifurcation problem on shrinking domains that originates from works of Morse and Smale.  		
	\end{abstract}

	\section{Introduction}
	Let $U\subset \mathbb{R}^N$ be a bounded smooth domain for some $N\in \mathbb{N}$ and consider the system of elliptic partial differential equations
	\begin{align}
		\label{eq:pdeGeneral}
		\left\{
		\begin{array}{rl}
			A\Delta u(x) &= \nabla_u F(\lambda,x,u(x))\ \hspace*{0.25cm} \mathrm{in}\ U\\		
			u(x) &= 0\ \hspace*{2.5cm}  \mathrm{on}\ \partial U,
		\end{array}
		\right.
	\end{align}
	where $\lambda\in[0,1]$, $A := \diag\{a_1,..,a_p\}\in\Mat(p,\mathbb{R})$, $a_i\in\{\pm 1\}$, $i=1,\ldots,p$ and $F:[0,1]\times U\times\mathbb{R}^p\rightarrow\mathbb{R}$ is a $C^2$-map. We assume that 
	
	\[\nabla_u F(\lambda,x,0)=0,\]
which implies that $u\equiv 0$ is a solution of \eqref{eq:pdeGeneral} for all $\lambda$, and our aim is to find novel criteria for bifurcation from this trivial branch of solutions. Bifurcation from a trivial branch for strongly indefinite elliptic systems as \eqref{eq:pdeGeneral} has been studied by various authors and different methods, e.g., \cite{GawryRy}, \cite{GoleRy}, \cite{AleIchDomain}, \cite{Szulkin}. Our approach is based on the spectral flow and a comparison property of it.\\
The spectral flow is a homotopy invariant for paths of selfadjoint Fredholm operators that was introduced by Atiyah, Patodi and Singer in \cite{AtiyahPatodi}. It has found several applications in symplectic analysis and mathematical physics (see \cite{NoraHermannNils}, \cite{Fredholm}). In \cite{Specflow} Fitzpatrick, Pejsachowicz and Recht introduced an application in variational bifurcation theory which has generalised several previously well known theorems in this field. The already mentioned comparison property of the spectral flow was originally shown by Pejsachowicz and the last named author in \cite{BifJac}. It allows to find bifurcation points by comparing a given system like \eqref{eq:pdeGeneral} to a simpler one, which here will be an equation as \eqref{eq:pdeGeneral} that does no longer explicitly depend on $x\in U$, i.e.,

	\begin{align}
		\label{eq:pdeIntro}
		\left\{
		\begin{array}{rl}
			A\Delta u(x) &= \nabla_u F(\lambda,u(x))\ \hspace*{0.25cm} \mathrm{in}\ U\\		
			u &= 0\ \hspace*{1.65cm}  \hspace*{0.25cm}\mathrm{on}\ \partial U,
		\end{array}
		\right.
	\end{align}
for some $C^2$-map $F:[0,1]\times\mathbb{R}^p\rightarrow\mathbb{R}$.\\
Solutions of equation \eqref{eq:pdeGeneral} (and thus in particular of \eqref{eq:pdeIntro}) can be obtained for any fixed $\lambda\in[0,1]$ as critical points of a $C^2$- functional under common growth assumptions. The Hessians at the trivial solution $u\equiv 0$ are selfadjoint Fredholm operators, and by the already mentioned theorem by Fitzpatrick, Pejsachowicz and Recht \cite{Specflow}, a non-vanishing spectral flow of this path yields the existence of a bifurcation point. The first main theorem of this paper is an index theorem that yields an explicit formula for the spectral flow of the Hessians coming from \eqref{eq:pdeIntro} and is of independent interest. As a consequence, we obtain a bifurcation criterion for equations of the type \eqref{eq:pdeIntro}, which relates the Dirichlet eigenvalues of the domain $U$ to the existence of bifurcation points for \eqref{eq:pdeIntro}. Let us note that the impact of the Dirichlet eigenvalues of a smooth domain for other types of strongly indefinite equations has been studied by Szulkin in \cite{Szulkin} and by the last named author in \cite{Edinburgh}. Afterwards we consider the more general equations \eqref{eq:pdeGeneral} and obtain bifurcation criteria by comparing them to a system of the type \eqref{eq:pdeIntro} which does no longer explicitly depend on $x\in U$. The novelty in our approach is that we use a refined way of applying the comparison principle from \cite{BifJac}, which eventually is based on Weyl's inequalities for eigenvalues. The final section deals with an application of the obtained spectral flow formulas to solutions of elliptic PDEs on shrinking domains, which is a setting that has previously been studied, e.g., by Smale in \cite{Smale}, by Deng and Jones in \cite{DengJones}, by Cox, Jones, Latushkin and Sukhtayev in \cite{Cox}, and by the last author and Portaluri in \cite{AleIchDomain}, \cite{AleBall} and \cite{AleSmaleIndef}. The latter works deal with bifurcation and in \cite{AleSmaleIndef} strongly indefinite systems of the type \eqref{eq:pdeGeneral} are considered, where the right hand side does not depend on a parameter $\lambda$, but the domain $U$ is assumed to be star-shaped and gets shrunk to a point. The main result of \cite{AleSmaleIndef} shows the appearance of non-trivial solutions during the shrinking of the domain (i.e. bifurcation) if a certain Maslov index in an infinite-dimensional symplectic Hilbert space does not vanish, which is theoretically appealing but not very useful in practice. As an application of the obtained results of this paper, we underpin their strength by showing that in this setting bifurcation can be found just from the coefficients of the corresponding linearised equations.\\
The paper is structured as follows. We firstly survey in the next section the spectral flow and its application to bifurcation theory as well as its comparison property. Section 3 briefly recaps the variational formulation of \eqref{eq:pdeGeneral}. In Section 4 we prove the announced index theorem and explain its application to the existence of bifurcation for \eqref{eq:pdeIntro}. Section 5 deals with the refined comparison method and also contains an example that shows that the obtained result is stronger than what could have been obtained by previously known applications of the comparison property from \cite{BifJac}. The final Section 6 discusses the announced applications of the obtained results to bifurcation on shrinking domains.


	\section{Recap: The Spectral Flow and Bifurcation}\label{section:sfl}
	The aim of this section is to briefly recall the spectral flow for paths of selfadjoint Fredholm operators and its application in bifurcation theory.\\
	Let $H$ be a real separable Hilbert space and $\mathcal{L}(H)$ the Banach space of all bounded linear operators on $H$. A selfadjoint operator in $\mathcal{L}(H)$ is Fredholm if its range is closed and its kernel is of finite dimension. In what follows, we denote by $\Phi_S(H)\subset\mathcal{L}(H)$ the set of all selfadjoint Fredholm operators. It was shown by Atiyah and Singer in \cite{AtiyahSinger} that $\Phi_S(H)$ has three connected components if $H$ is of infinite dimension. If we denote by $\sigma_{ess}(T)=\{\lambda\in\mathbb{R}: \lambda-T\notin\Phi_S(H)\}$ the \textit{essential spectrum} of an operator $T\in\Phi_S(H)$, then these components are
	
	\begin{align*}
		\Phi^+_S(H)&=\{T\in\Phi_S(H):\, \sigma_{ess}(T)\subset (0,\infty)\},\\
		\Phi^-_S(H)&=\{T\in\Phi_S(H):\, \sigma_{ess}(T)\subset (-\infty,0)\},\\
		\Phi^i_S(H)&=\Phi_S(H)\setminus (\Phi^+_S(H)\cup \Phi^-_S(H)).
	\end{align*}  
	It is not difficult to see that $\Phi^\pm_S(H)$ are contractible as topological spaces. Atiyah and Singer showed that instead $\Phi^i_S(H)$ has a non-trivial topology. In particular, they constructed in \cite{AtiyahPatodi} an explicit isomorphism $\sfl:\pi_1(\Phi^i_S(H))\rightarrow\mathbb{Z}$ which they called the spectral flow. Here $\pi_1(\Phi^i_S(H))$ is the fundamental group of $\Phi^i_S(H)$ which consists of homotopy classes of based loops in $\Phi^i_S(H)$. Roughly speaking, the spectral flow counts the net number of eigenvalues crossing $0$ in the course of the path. The definition was later extended to open paths in $\Phi_S(H)$ and the spectral flow became a frequently used invariant in various branches of mathematics. Here we refrain from recalling its definition, which can be found, e.g., in \cite{Phillips}, \cite{Robbin-Salamon} or \cite{Specflow}. Instead we introduce it by the following theorem of Ciriza, Fitzpatrick and Pejsachowicz \cite{JacoboUniqueness} (cf. \cite{Lesch}, \cite{CompSfl}), where $\Omega(\Phi_S(H),G\Phi_S(H))$ denotes the set of paths in $\Phi_S(H)$ with invertible endpoints.
	
	\begin{theorem}
		For each real separable Hilbert space $H$ there is one and only one map
		
		\[\sfl:\Omega(\Phi_S(H),G\Phi_S(H))\rightarrow\mathbb{Z}\]
		such that
		
		\begin{itemize}
			\item[(H)] If $h:I\times I\rightarrow\Phi_S(H)$ is continuous and $h(s,0), h(s,1)$ are invertible for all $s\in I$, then
			
			\[\sfl(h(0,\cdot)=\sfl(h(1,\cdot)).\]
			\item[(Z)] If $L=\{L_\lambda\}_{\lambda\in I}\in\Omega(\Phi_S(H),G\Phi_S(H))$ is such that $L_\lambda$ is invertible for all $\lambda\in I$, then $\sfl(L)=0$.
			\item[(S)] If $H'$ is another real separable Hilbert space, $L\in \Omega(\Phi_S(H),G\Phi_S(H))$ and\linebreak $L'\in \Omega(\Phi_S(H'),G\Phi_S(H'))$, then
			
			\[\sfl(L\oplus L')=\sfl(L)+\sfl(L').\]
			\item[(N)] If $L=\{L_\lambda\}_{\lambda\in I}\in\Omega(\Phi_S(H),G\Phi_S(H))$ and $H$ is of finite dimension, then 
			
			\begin{align}\label{sfl=Morse}
			\sfl(L)=\mu_{Morse}({L_0})-\mu_{Morse}({L_1}).
			\end{align} 
		\end{itemize}
	\end{theorem}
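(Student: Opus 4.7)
The plan is to prove existence and uniqueness separately. For \emph{existence}, I would use the classical partition-and-projection construction. Given $L=\{L_\lambda\}_{\lambda\in I}\in\Omega(\Phi_S(H),G\Phi_S(H))$, compactness of $I$ together with continuity of $\lambda\mapsto L_\lambda$ yields a partition $0=t_0<t_1<\cdots<t_n=1$ and positive numbers $a_i$ with $\pm a_i\notin\sigma(L_t)$ for every $t\in[t_{i-1},t_i]$. The Riesz spectral projections $P_i(t)$ of $L_t$ onto $(-a_i,a_i)$ are continuous in $t$ and of finite rank, so one may define
\begin{equation*}
\sfl(L):=\sum_{i=1}^n\bigl(\mu_-(L_{t_i}P_i(t_i))-\mu_-(L_{t_{i-1}}P_i(t_{i-1}))\bigr),
\end{equation*}
where $\mu_-$ counts strictly negative eigenvalues. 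A standard refinement argument (any two admissible partitions admit a common refinement, and inserting a breakpoint does not change the telescoping sum) gives well-definedness, and (Z), (S), (N) are immediate; (H) follows by applying the same recipe uniformly to the two-parameter family $h$ and noting that at invertible endpoints the right-hand side of (N) reduces to the Morse-index difference already implicit in the construction.

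For \emph{uniqueness}, suppose $\sfl$ and $\sfl'$ both satisfy (H)--(N). Fix $L\in\Omega(\Phi_S(H),G\Phi_S(H))$ and take the partition/projection data above. On each subinterval $[t_{i-1},t_i]$, Kato's theorem on transformation functions produces a continuous family $U_i(t)$ of orthogonal operators with $U_i(t_{i-1})=\mathrm{Id}$ and $U_i(t)P_i(t_{i-1})U_i(t)^{-1}=P_i(t)$. Using (H), I would deform $L|_{[t_{i-1},t_i]}$ through the conjugate path $U_i(t)^{-1}L_tU_i(t)$, which splits as a genuine direct sum $L_t^{\mathrm{fin}}\oplus L_t^{\mathrm{inf}}$ on the now fixed decomposition $H=\im P_i(t_{i-1})\oplus\ker P_i(t_{i-1})$. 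By construction $L_t^{\mathrm{inf}}$ has spectrum contained in $(-\infty,-a_i]\cup[a_i,\infty)$ and is invertible throughout the subinterval, so (S) and (Z) leave only the finite-dimensional contribution, to which (N) applies and forces
\begin{equation*}
\sfl(L|_{[t_{i-1},t_i]})=\mu_{Morse}(L_{t_{i-1}}^{\mathrm{fin}})-\mu_{Morse}(L_{t_i}^{\mathrm{fin}}).
\end{equation*}
The right-hand side depends only on $L$, so $\sfl$ and $\sfl'$ agree on each subinterval; concatenating (an invertible-endpoint homotopy covered by (H)) yields $\sfl(L)=\sfl'(L)$.

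The main obstacle is the \emph{trivialisation} step in the uniqueness argument: axiom (S) only speaks of direct sums on a fixed Hilbert space, whereas the natural finite-dimensional piece of $L_t$ lives in the moving subspace $\im P_i(t)$. Bridging this requires the explicit Kato transformation function together with the homotopy axiom (H) to absorb the conjugation, and it is precisely at this juncture that all four axioms become simultaneously necessary. A secondary, more routine subtlety is the independence of the existence construction from the chosen partition and gap numbers $a_i$, which one handles by common refinement and the observation that the telescoping cancellations in the sum match the contributions of any finer partition.
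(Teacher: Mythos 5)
The paper does not prove this theorem; it simply \emph{cites} the uniqueness statement from Ciriza--Fitzpatrick--Pejsachowicz (and cf.\ Lesch and Starostka--Waterstraat) and uses it as an axiomatic introduction to the spectral flow. There is therefore no in-paper proof to compare against, so you are judged solely on whether your sketch is sound.

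Your overall strategy (Phillips-style partition/projection construction for existence, Kato transformation functions plus the direct sum and normalization axioms for uniqueness) is indeed the approach in the literature, but the sketch has two substantive gaps. First, the four axioms $(H)$, $(Z)$, $(S)$, $(N)$ are stated only for paths in $\Omega(\Phi_S(H),G\Phi_S(H))$, i.e.\ paths with \emph{invertible endpoints}. When you pass to a restriction $L|_{[t_{i-1},t_i]}$, the values $L_{t_{i-1}}$, $L_{t_i}$ are chosen to satisfy a spectral-gap condition, not to be invertible, so these restrictions need not lie in the domain of $\sfl$, and you cannot apply the axioms to them as written. Repairing this requires perturbing the partition so that interior breakpoints fall at invertible parameter values (generically possible, but it must be argued), or carrying the analysis through without evaluating $\sfl$ on the pieces. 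Second, and more seriously, your closing sentence ``concatenating \dots yields $\sfl(L)=\sfl'(L)$'' invokes an additivity of $\sfl$ under concatenation of paths. Concatenation is \emph{not} among $(H)$, $(Z)$, $(S)$, $(N)$: axiom $(S)$ speaks of direct sums of operators on $H\oplus H'$, not of concatenating consecutive subpaths on the same $H$. Deriving concatenation from the stated axioms is precisely the delicate point that the cited references treat with care (or add as a separate axiom), and your sketch passes over it without justification. A lesser issue on the existence side is that homotopy invariance $(H)$ is by far the hardest property to verify for the partition/projection formula --- it requires showing stability of the telescoping sum under simultaneous refinement in a two-parameter family --- and ``applying the same recipe uniformly to $h$'' does not address this.
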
 
\noindent	Here $\mu_{Morse}(L_\lambda)$ denotes the Morse index, i.e., the number of negative eigenvalues counted according to their multiplicities. Let us note that this number is also finite for operators in $\Phi^+_S(H)$ if $H$ is of infinite dimension, and \eqref{sfl=Morse} still holds for paths in this component of $\Phi_S(H)$. Further properties of the spectral flow and methods to compute it can be found, e.g., in \cite{Specflow} or \cite{NoraHermannNils}. As we do not need them here, we refrain from recalling them. Let us finally note that the assumption of invertibility of the endpoints can easily be lifted by a small perturbation of the path. The spectral flow is then still homotopy invariant as long as the homotopy keeps the endpoints of the path fixed. Details can be found in \cite[\S 7]{BifJac}.\\
Let us now consider a continuous family of functionals $f:I\times H\rightarrow\mathbb{R}$ such that each $f_\lambda:=f(\lambda,\cdot):H\rightarrow\mathbb{R}$ is $C^2$ and its derivatives depend continuously on the parameter $\lambda\in I$. Henceforth we assume that $0\in H$ is a critical point of all $f_\lambda$ and $L_\lambda:=\nabla^2_uf_\lambda(0)$, the Hessian of $f_\lambda$ at the critical point $0\in H$, is a Fredholm operator. Then $L:=\{L_\lambda\}_{\lambda\in I}$ is a path in $\Phi_S(H)$ and thus the spectral flow $\sfl(L)$ is defined. A well known theorem in nonlinear analysis asserts that if $L_0$, $L_1$ are invertible and their Morse indices are finite and differ, then there is a bifurcation of critical points of the family $f$ from $0\in H$ (cf., e.g., \cite{Mawhin}, \cite{SmollerWasserman}). By \eqref{sfl=Morse} and the explanations below that equation, this just means that the spectral flow of the path $L$ of Hessians is non-trivial. The following generalisation to functionals $f$ having Hessians in an arbitrary component of $\Phi_S(H)$ was obtained by Fitzpatrick, Pejsachowicz and Recht in \cite{Specflow} (cf. \cite{BifJac}).
	
	\begin{theorem}\label{thm-bif}
		If $L_0$ and $L_1$ are invertible and $\sfl(L)\neq 0$, then there is a bifurcation of critical points of $f$ from the trivial branach, i.e., there exists $\lambda^\ast\in(0,1)$ such that in every neighbourhood $U$ of $(\lambda^\ast,0)$ in $I\times H$ there is some $(\lambda,u)\in U$ such that $u\neq 0$ is a critical point of $f_\lambda$.  
	\end{theorem}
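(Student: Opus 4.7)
The plan is to argue by contradiction: suppose no bifurcation occurs from the trivial branch. Then there exist $\varepsilon>0$ and an open neighbourhood $\mathcal{U}$ of $I\times\{0\}$ in $I\times H$ such that $0$ is the only critical point of $f_\lambda$ lying in the slice $\mathcal{U}\cap(\{\lambda\}\times B_\varepsilon(0))$ for every $\lambda\in I$. The goal is to convert this isolation hypothesis into a contradiction with $\sfl(L)\neq 0$.

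The first step is a parametrised Lyapunov--Schmidt reduction. Since each $L_\lambda$ is Fredholm and $I$ is compact, a spectral projection argument yields a finite cover $I=\bigcup_i I_i$ together with, on each $I_i$, an orthogonal splitting $H=V_i\oplus V_i^{\perp}$ with $\dim V_i<\infty$, chosen so that the spectrum of $L_\lambda$ restricted to $V_i^{\perp}$ is uniformly bounded away from $0$ for $\lambda\in I_i$, and hence $L_\lambda|_{V_i^{\perp}}$ is invertible. Applying the implicit function theorem to the equation $P_{V_i^\perp}\nabla_u f_\lambda(v+w)=0$ in the variable $w\in V_i^\perp$ produces, for each $\lambda\in I_i$, a smooth map $w=w_\lambda(v)$ defined for small $v\in V_i$, and the reduced functional $g^i_\lambda(v):=f_\lambda(v+w_\lambda(v))$ has the property that its critical points near $0$ correspond bijectively to those of $f_\lambda$ near $0$. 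A routine computation shows $\nabla^2 g^i_\lambda(0)=L_\lambda|_{V_i}-\bigl(L_\lambda|_{V_i\to V_i^\perp}\bigr)^\ast \bigl(L_\lambda|_{V_i^\perp}\bigr)^{-1}L_\lambda|_{V_i\to V_i^\perp}$.

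The second step is to pass the spectral flow through this reduction. On each $I_i$ the path $L_\lambda$ is homotopic (via a linear interpolation that keeps the $V_i^\perp$-block invertible throughout) to the block-diagonal path $\nabla^2 g^i_\lambda(0)\oplus L_\lambda|_{V_i^\perp}$. Since the second summand stays invertible on $I_i$, properties (H), (Z) and (S) of the spectral flow yield that the spectral flow of $L|_{I_i}$ equals that of the finite-dimensional path $\nabla^2 g^i_\lambda(0)$. Patching the local reductions across the overlaps and using additivity of $\sfl$ under concatenation, one obtains, after stabilisation by trivial invertible summands, a single finite-dimensional path $L^{\mathrm{red}}$ on $I$ with $\sfl(L^{\mathrm{red}})=\sfl(L)\neq 0$. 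By property (N) this forces $\mu_{Morse}(L^{\mathrm{red}}_0)\neq \mu_{Morse}(L^{\mathrm{red}}_1)$.

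The third step invokes the classical finite-dimensional gradient bifurcation theorem of Krasnoselskii type (as in Mawhin or Smoller--Wasserman): if the Morse index of the Hessian at an isolated critical point of a continuously parametrised family of $C^2$ functionals on a finite-dimensional space jumps, then bifurcation of critical points must occur. Applied to the reduced family this yields, arbitrarily close to $(\lambda^\ast,0)$ for some $\lambda^\ast\in(0,1)$, critical points $v\neq 0$ of $g^i_\lambda$; lifting via $u=v+w_\lambda(v)$ gives non-trivial critical points of $f_\lambda$ in every neighbourhood of $(\lambda^\ast,0)$, contradicting the assumed absence of bifurcation.

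The main obstacle is the global character of the reduction: eigenvalues of $L_\lambda$ may cross $0$ along $I$, so no single orthogonal projection captures the relevant spectral information, and the local reductions on different $I_i$ use different finite-dimensional spaces $V_i$. Reconciling them so that the resulting finite-dimensional path of Hessians is well-defined up to stabilisation, and so that its spectral flow genuinely coincides with $\sfl(L)$, is the technical heart of the argument; the homotopy and direct-sum axioms (H) and (S) are exactly what makes this patching go through.
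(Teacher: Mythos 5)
The paper does not prove this statement; it is quoted from Fitzpatrick, Pejsachowicz and Recht \cite{Specflow}, so there is no in-text proof to compare against. Your strategy --- contradiction, parametrised Lyapunov--Schmidt reduction, transfer of the spectral flow to the reduced Hessians, and an appeal to the classical finite-dimensional Krasnoselskii-type theorem --- is indeed the strategy of \cite{Specflow}, so the choice of route is sound.

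There is, however, a genuine gap at the point where you conclude. You patch the local reductions into a single finite-dimensional operator path $L^{\mathrm{red}}$ on $[0,1]$, obtain $\mu_{Morse}(L^{\mathrm{red}}_0)\neq\mu_{Morse}(L^{\mathrm{red}}_1)$ by property (N), and then say this lets you apply the classical bifurcation theorem to ``the reduced family.'' But there is no single reduced \emph{family of functionals} on $[0,1]$: the $g^i_\lambda$ are defined only on their subintervals $I_i$ and on different spaces $V_i$, and the patched object $L^{\mathrm{red}}$ is only a path of matrices, not the path of Hessians of any family of functionals whose critical points correspond to those of $f$. A Morse-index jump of $L^{\mathrm{red}}$ over the whole of $I$ does not, by itself, hand you a Morse-index jump of any one $g^i$ over its own interval $I_i$, which is what the Krasnoselskii theorem actually needs. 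The correct localisation is: use additivity of the spectral flow under concatenation (which you should state and justify --- it is not among the axioms (H), (Z), (S), (N) listed in the paper) to write $\sfl(L)=\sum_i\sfl(L|_{I_i})$, choosing the subdivision points $a_i$ to be parameter values where $L_{a_i}$ is invertible; then $\sfl(L)\neq 0$ forces $\sfl(L|_{I_i})\neq 0$ for some $i$, hence a Morse-index jump of $\nabla^2 g^i_\lambda(0)$ over $I_i$ by your second step, and the classical theorem applies to $g^i$ on $I_i$ alone. A second, smaller issue: ensuring the subdivision points can be taken at invertible parameter values is not automatic, since the set of $\lambda$ with $\ker L_\lambda\neq\{0\}$ need not be nowhere dense. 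One has to argue either via a small homotopy of $L$ rel endpoints (using (H) and the fact that $\Phi_S(H)\setminus G\Phi_S(H)$ can be avoided by a generic perturbation) or by allowing non-invertible concatenation points and using the endpoint convention built into the definition of $\sfl$. As it stands, your write-up skips over both of these points, and they are exactly where the hypothesis ``no bifurcation'' is actually consumed.
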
   
\noindent
Let us note that Alexander and Fitzpatrick have shown in \cite{AlexanderMike} that this theorem is optimal in a reasonable sense. On the other hand, the spectral flow often turns out to be difficult to compute which limits the applicability of Theorem \ref{thm-bif}. However, note that Theorem \ref{thm-bif} only requires the spectral flow to be non-trivial and thus the exact quantity actually is not needed. In \cite{BifJac} the second author obtained in a joint work with Pejsachowicz the following way to estimate the spectral flow.
	
	\begin{theorem}\label{thm-comparison}
		Let $L=\{L_\lambda\}_{\lambda\in I}$ and $M=\{M_\lambda\}_{\lambda\in I}$ be paths in $\Phi_S(H)$ such that $L_\lambda-M_\lambda$ is compact for all $\lambda\in I$. If
		
		\[L_0\geq M_0\quad\text{and}\quad L_1\leq M_1,\]
		then
		
		\[\sfl(L)\leq\sfl(M).\] 
	\end{theorem}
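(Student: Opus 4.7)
The plan is to reduce the comparison inequality to a statement about two auxiliary paths obtained by linearly interpolating between $L$ and $M$. Set
\[
h(s,\lambda):=(1-s)L_\lambda+sM_\lambda=L_\lambda-s(L_\lambda-M_\lambda),\qquad (s,\lambda)\in I\times I.
\]
Since $L_\lambda-M_\lambda$ is compact for every $\lambda$ and compact perturbations of selfadjoint Fredholm operators are again selfadjoint Fredholm, $h$ takes values in $\Phi_S(H)$; joint continuity on $I\times I$ is inherited from $L$ and $M$. Thus $h$ is a continuous two-parameter family of selfadjoint Fredholm operators filling the square $I\times I$.

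Write $\gamma_i(s):=h(s,i)$ for $i=0,1$, so that $\gamma_0$ joins $L_0$ to $M_0$ and $\gamma_1$ joins $L_1$ to $M_1$. Concatenating $L$, $\gamma_1$, the reversal of $M$, and the reversal of $\gamma_0$ yields a loop in $\Phi_S(H)$ which is nullhomotopic through $h$. Applying the homotopy invariance of the spectral flow (axiom $(H)$, extended to free endpoints by the small-perturbation trick from \cite[\S 7]{BifJac} recalled at the end of Section~\ref{section:sfl}) together with additivity under concatenation then gives
\[
\sfl(L)+\sfl(\gamma_1)-\sfl(M)-\sfl(\gamma_0)=0,
\]
so that $\sfl(L)-\sfl(M)=\sfl(\gamma_0)-\sfl(\gamma_1)$, and the theorem reduces to the two inequalities $\sfl(\gamma_0)\leq 0$ and $\sfl(\gamma_1)\geq 0$.

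Both of these follow from a monotonicity argument. For the right side of the square, $\gamma_1(s)=L_1+sK$ with $K:=M_1-L_1\geq 0$ compact, so the min-max principle shows that every eigenvalue of $\gamma_1(s)$ is nondecreasing in $s$; equivalently, the crossing form at any regular zero of an eigenvalue is $\langle Ku,u\rangle\geq 0$. Either viewpoint yields $\sfl(\gamma_1)\geq 0$. Applying the same argument with the sign reversed to $\gamma_0(s)=L_0-s(L_0-M_0)$, where the perturbation is now by a nonpositive compact operator, gives $\sfl(\gamma_0)\leq 0$ and completes the proof.

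The main obstacle I expect is making this monotonicity/crossing-form computation for $\gamma_0,\gamma_1$ fully rigorous, since for a generic linear compact perturbation the zero-crossings need not be regular nor isolated and eigenvalues can accumulate at $0$. The standard workaround is a generic-perturbation argument: replace $K$ by $K+\varepsilon P$ for a small selfadjoint compact $P$ chosen so that the perturbed path has only regular zero-crossings, apply the crossing-form formula, and pass to the limit $\varepsilon\to 0$ using continuity and homotopy invariance of the spectral flow. A secondary, entirely routine technical point is the possible non-invertibility of $L_0,L_1,M_0,M_1$, which is handled exactly as in \cite[\S 7]{BifJac} by a coherent end-perturbation of all four paths that cannot affect the inequality.
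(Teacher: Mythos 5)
The paper does not actually prove Theorem~\ref{thm-comparison}; it quotes it from \cite{BifJac}. So there is no in-paper argument to compare against, and your proof has to be judged on its own merits.

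Your square-homotopy reduction is the right structural idea and is, to my knowledge, essentially how such comparison statements are established. Filling the square with $h(s,\lambda)=(1-s)L_\lambda+sM_\lambda$ and using that the spectral flow around the boundary of a square that lifts to a map on $I\times I$ vanishes gives $\sfl(L)-\sfl(M)=\sfl(\gamma_0)-\sfl(\gamma_1)$, and the compactness hypothesis is exactly what makes the two vertical edges $\gamma_0,\gamma_1$ paths in $\Phi_S(H)$. One small remark here: the vanishing of the spectral flow around a nullhomotopic loop is a mild strengthening of property $(H)$ as stated in Section~\ref{section:sfl} (which assumes invertible endpoints $h(s,0),h(s,1)$, an assumption $\gamma_0,\gamma_1$ need not satisfy), so you should either invoke the homotopy invariance with fixed endpoints mentioned after Theorem~2.1 together with additivity under concatenation, or cite the loop version directly.

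The one step that is not yet watertight is exactly the one you flag: $\sfl(\gamma_1)\ge 0$ for $\gamma_1(s)=L_1+sK$ with $K=M_1-L_1\ge 0$ compact. Your proposed generic-perturbation fix needs a small adjustment: if you replace $K$ by $K+\varepsilon P$ for an arbitrary selfadjoint compact $P$, the crossing form $\langle(K+\varepsilon P)u,u\rangle$ may no longer have a sign, so regularity of crossings would buy you nothing. You must insist on $P\ge 0$ (for instance a positive finite-rank operator) so that the perturbed path is still nondecreasing and the crossing form remains positive semi-definite; one then needs a transversality argument to argue that a generic such $P$ makes all crossings regular, and this is less immediate than for unconstrained $P$. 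A cleaner way to close the gap is to avoid crossing forms altogether and use the known fact that a nondecreasing path in $\Phi_S(H)$ with invertible endpoints and compact total increment has nonnegative spectral flow; this is a standard lemma (see, e.g., \cite{NoraHermannNils} or \cite{CompSfl}) and is precisely what your monotonicity heuristic encodes. The symmetric argument for $\gamma_0$ then gives $\sfl(\gamma_0)\le 0$ as you say, and the theorem follows. Apart from these two points (the loop-version of homotopy invariance, and the sign constraint on the perturbing operator $P$ or a citation for monotone paths), the proof is correct and complete.
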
 
\noindent	Here $\leq$ denotes the common partial order on the set of all selfadjoint operators given by
	
	\[T\leq S :\Longleftrightarrow \langle(S-T)u,u\rangle\geq 0,\, u\in H.\] 
One of the main results of this work, Theorem \ref{thm-BifComp}, is a refined version of the previous theorem that applies to operators stemming from equations of the type \eqref{eq:pdeGeneral}.


	
	\section{The Variational Setting}\label{sec:VarSet}
	The aim of this section is to briefly recap the variational setting of the equations  \eqref{eq:pdeGeneral}, where we mainly follow \cite{GoleRy}. Henceforth $I:=[0,1]$ denotes the unit interval and we assume that
	\begin{itemize}
		\item[(A1)] $F\in \mathcal{C}^2(I\times U\times \mathbb{R}^p,\mathbb{R})$,
		
		\item[(A2)] $0$ is a critical point of $F_\lambda:=F(\lambda,x,\cdot):\mathbb{R}^p\rightarrow\mathbb{R}$ for all $(\lambda,x) \in I\times U$. In what follows, we set  $$B_\lambda(x):=\nabla^2_uF(\lambda,x,0)\in\Mat(p,\mathbb{R}) \,.$$
		
		\item[(A3)] There exist $C>0$ and $1\le s< (N+2)(N-2)^{-1}$ if $N\geq 3$ such that 
		$$\vert\nabla^2_u F(\lambda,x,u)\vert\le C(1+\vert u\vert^{s-1})\,.$$ 
		If $N=2$, we instead require that $s\in [1,\infty)$, and for $N=1$ we do not impose any growth condition on $F$.
		
		\item[(A4)] We let $p_1\in\mathbb{N}\cup\{0\}$, $0\leq p_1\leq p$,  $$a_1=...=a_{p_1}=-1,\, a_{p_1+1}=...=a_p=1,$$
		and set $A = \diag \{\underset{p_1}{\underbrace{-1,...,-1}},\underset{p_2:=p-p_1}{\underbrace{1,...,1}}\}$.
	\end{itemize}
	Let $H^1_0(U)$ be the standard Sobolev space with scalar product 
	$$\left\langle u_1,v_1\right\rangle _{H^1_0(U)} =\int_U \left\langle \nabla u_1(x),\nabla v_1(x)\right\rangle\,\text{d}x.$$
	Then $H:=\bigoplus_{i=1}^p H^1_0(U)$ is a Hilbert space with respect to 
	$$\left\langle u,v\right\rangle _H := \sum_{i=1}^{p}\left\langle u_i,v_i\right\rangle _{H^1_0(U)}.$$
	Now consider the map $f:I\times H\to \mathbb{R}$  given by
	\begin{align}
		\label{eq:functional}
		f(\lambda,u) :=\frac{1}{2}\int_U \sum_{i=1}^p (-a_i\vert\nabla u_i(x)\vert^2)\,\text{d}x-\int_U F(\lambda,x, u(x))\,\text{d}x.
	\end{align}
	It follows from assumption $(A2)$ that there exists a map $g\in \mathcal{C}^2(I\times U\times \mathbb{R}^p,\mathbb{R})$ such that 
	$$F(\lambda,x, u)=\frac{1}{2}\left\langle B_\lambda(x) u,u\right\rangle +g(\lambda,x,u)$$ 
	and for every $\lambda\in I$, $x\in U$, we have $\nabla_u\,g(\lambda,x,0)=0$ as well as $\nabla_u^2\,g(\lambda,x,0)=0$.\\
	It is well known (cf. \cite{Rabinowitz}) that $f$ is in $\mathcal{C}^2(I\times H,\mathbb{R})$ under the assumptions $(A1)-(A4)$ and the gradient of $f(\lambda,\cdot):H\rightarrow\mathbb{R}$ is of the form  $$\nabla_uf(\lambda,u) = Tu+K_{\lambda}u-\nabla_u\eta(\lambda,u),$$
		where 
		\begin{itemize}
			\item[(i)] $T:H\to H$ is the selfadjoint invertible operator 
			\begin{align}
				\label{eq:OperatorT}
				Tu:=-Au.	
			\end{align}
			\item [(ii)] $K_{\lambda}:H\to H$ implicitly is given by
			\begin{align}
				\label{eq:OperatorK}
				\left\langle K_{\lambda}u,v\right\rangle _H=-\int_U \left\langle  B_\lambda(x) u(x),v(x)\right\rangle_{\mathbb{R}^p} \,\text{d}x	
			\end{align} 
			and it is a selfadjoint compact operator.
			\item[(iii)] $\eta:I\times H \to \mathbb{R}$ is the $C^2$-map defined by $$\eta(\lambda,u)=\int_U g(\lambda,x, u(x))\,\text{d}x,$$ 
			and $\nabla_u\eta(\lambda,0)=0$ as well as $\nabla_u^2\eta(\lambda,0)=0$ for all $\lambda \in I$.
		\end{itemize}
		Moreover, the critical points of $f_\lambda:=f(\lambda,\cdot):H\rightarrow\mathbb{R}$ are the weak solutions of \eqref{eq:pdeGeneral}, and thus in particular $0\in  H$ is a critical point of all $f_\lambda$, $\lambda\in I$.
%
	\noindent
	It follows from $(i)-(iii)$ that the Hessians $\nabla_u^2f_\lambda(0)$ at the critical point $0\in H$ are the selfadjoint operators 
	\begin{align}
		\label{eq:Fredholmop}
		L_\lambda:=T+K_{\lambda}
	\end{align} 
	Note that these are compact perturbations of an invertible operator and hence Fredholm. Accordingly $L:=\{L_\lambda \}_{\lambda\in I}$ is a path of selfadjoint Fredholm operators so that the spectral flow of $L$ is defined. Moreover, the kernel of $L_\lambda$ consists of the solutions of the linearised equations
	
	\begin{align}
		\label{eq:pdelinearised}
		\left\{
		\begin{array}{rl}
			A\Delta u(x) &= B_\lambda(x) u(x)\ \hspace*{0.25cm} \mathrm{in}\ U\\		
			u &= 0\ \hspace*{1.65cm}  \mathrm{on}\ \partial U,
		\end{array}
		\right.
	\end{align}
	and thus $L_\lambda$ is invertible if and only if \eqref{eq:pdelinearised} has no non-trivial solution.
		
	
	\section{Index Theorem and Bifurcation for \eqref{eq:pdeIntro}}\label{sec:4}
	In this section we consider the system \eqref{eq:pdeIntro} and thus assume in addition to $(A1)-(A4)$ that the matrix family $B$ in $(A2)$ does not depend on $x\in U$, i.e., \eqref{eq:pdelinearised} is now of the simpler form
	
	\begin{align}
		\label{eq:pdelinearised2}
		\left\{
		\begin{array}{rl}
			A\Delta u(x) &= B_\lambda u(x)\ \hspace*{0.25cm} \mathrm{in}\ U\\		
			u &= 0\ \hspace*{1.00cm}  \mathrm{on}\ \partial U.
		\end{array}
		\right.
	\end{align}
	Let us emphasise that all results of this section obviously also apply to equations of the more general type \eqref{eq:pdeGeneral} as long as this additional condition on $B$ is satisfied.\\
	 In what follows we let $\{f_n\}_{n\in\mathbb{N}}$ be the orthonormal basis of $H^1_0(U)$ given by the eigenfunctions of the Dirichlet boundary problem
		
	\begin{align} 
		\label{eq:ewp}
		\left\{
		\begin{array}{rl}-\Delta 
			u&=\,\lambda u\, \hspace{0.25cm}  \mathrm{in}\ U\\
			u&=\,0\ \hspace{0.45cm} \mathrm{on}\ \partial U.
		\end{array}
		\right.
	\end{align}
	and we assume that the corresponding eigenvalues $\alpha_n$ satisfy $0<\alpha_n\leq\alpha_m$ if $m\geq n$. It is readily seen from Green's identities that for all $n,m\in \mathbb{N},\,  m\not=n$ 
	\begin{align} 	
		\label{eq:onb1}
		\int_U f_nf_m\,\text{d}x=0
	\end{align} 
	and 
	\begin{align*} 
		\label{eq:onb2}
		1=\int_U \vert\nabla f_n\vert^2\,\text{d}x=\alpha_n\int_Uf^2_n\,\text{d}x.
	\end{align*}
	We now consider the orthonormal decomposition 
	\begin{align}
		H=\bigoplus_{i=1}^p H^1_0(U) =\bigoplus_{k\in \mathbb{N}} H_k
	\end{align}
	where $H_k:=\text{span}\{f_k\cdot e_i\mid 1\le i\le p \}$ and   $\{e_i\}_{i=1}^p$ denotes the standard basis of $\mathbb{R}^p$.
	\begin{lemma} 
		\label{lemma:OGProj}
		Let $P_k$, $k\in\mathbb{N}$, be the orthogonal projection in $H$ onto $H_k$. If $k,l\in \mathbb{N}, k\not=l$, then $$P_kK_{\lambda}P_l=0.$$
	\end{lemma}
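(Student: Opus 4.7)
The plan is to verify the equivalent statement that $\langle K_\lambda u, v\rangle_H = 0$ for every $u\in H_l$ and $v\in H_k$. Since $P_kK_\lambda P_l = 0$ iff $K_\lambda$ sends $H_l$ into $H_k^\perp$, showing this inner product vanishes on these two subspaces is exactly what is needed. By bilinearity and the fact that $\{f_m \cdot e_i : 1\le i\le p\}$ spans $H_m$, it is enough to check the identity on basis vectors, so I take $u = f_l\cdot e_j$ and $v = f_k\cdot e_i$ for arbitrary $i,j\in\{1,\ldots,p\}$.

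Next I plug these into the defining formula \eqref{eq:OperatorK} for $K_\lambda$. Here the crucial hypothesis of this section enters: since $B_\lambda$ does not depend on $x\in U$, it can be pulled out of the integral, giving
\[
\langle K_\lambda(f_l\cdot e_j),\, f_k\cdot e_i\rangle_H = -\langle B_\lambda e_j, e_i\rangle_{\mathbb{R}^p}\int_U f_l(x)f_k(x)\,\mathrm{d}x = -(B_\lambda)_{ij}\int_U f_l(x)f_k(x)\,\mathrm{d}x.
\]
The orthogonality relation \eqref{eq:onb1} for the Dirichlet eigenfunctions then forces the integral to vanish whenever $k\neq l$, which concludes the argument.

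There is no real obstacle here; the proof is a direct computation and the only subtle point is recognising that everything hinges on the $x$-independence of $B_\lambda$. Without that assumption one would be left with $\int_U (B_\lambda(x))_{ij} f_l(x) f_k(x)\,\mathrm{d}x$, which need not vanish, and this is precisely why the analysis of the more general system \eqref{eq:pdeGeneral} later in the paper will require a different approach (the comparison principle from Theorem \ref{thm-comparison}) rather than such a clean block-diagonal decomposition.
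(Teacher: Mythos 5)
Your proof is correct and follows essentially the same route as the paper's: reduce to basis vectors of $H_k$ and $H_l$, plug into the defining relation \eqref{eq:OperatorK}, pull $B_\lambda$ out of the integral using its $x$-independence, and invoke the orthogonality \eqref{eq:onb1}. Your remark about why the argument breaks down for $x$-dependent $B_\lambda(x)$ is also accurate and explains why Section 5 needs the comparison principle instead.
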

	\begin{proof}
		Clearly,
		$$\left\langle P_kK_{\lambda}P_lu,v\right\rangle _H=\left\langle K_{\lambda}P_lu,P_kv\right\rangle_H, \quad u,v\in H.$$
		For $u=f_ke_i$ and $v=f_le_j$, where $1\leq i,j\leq p$, it follows by \eqref{eq:onb1} that 
		$$\left\langle K_{\lambda}u,v\right\rangle_H=\int_Uf_kf_l \langle B_\lambda e_i,e_j\rangle\,\text{d}x=\langle B_\lambda e_i,e_j\rangle\int_Uf_kf_l \,\text{d}x=0,$$
		which shows that $P_kK_{\lambda}P_l=0$ for $ k\not=l$.
	\end{proof}
	\noindent
	By the previous lemma, the operators $K_{\lambda}$ leave the spaces $H_k$ invariant. Note that this also is the case for the operator $T$, and thus we obtain paths of operators by $L_\lambda^k :=L_\lambda\mid_{H_k}:H_k\rightarrow H_k$. It is readily seen that $L_\lambda^k$ is represented by the matrix
	\begin{align*}
		L_\lambda^k = 
		\begin{pmatrix}
			I_{p_1\times p_1} &  & 0 \\
			0 &  & -I_{p_2\times p_2}
		\end{pmatrix}
		-\frac{1}{\alpha_k}B_{\lambda}=-A-\frac{1}{\alpha_k}B_{\lambda},\, \lambda \in I,
	\end{align*}
	with respect to the orthonormal basis $\{f_ke_i\mid 1\le i\le p\}$ of $H_k$.
	Since $\alpha_k\to \infty$ as $k\to \infty$, there exists $n\in\mathbb{N}$ such that 
	$L_\lambda^k$ is an isomorphism and 
	\begin{align}
		\label{eq:sgn}
		\sgn(L_\lambda^k)=\sgn(-A) \text{ for all }k\ge n,\, \lambda \in I.
	\end{align} 
	Thus the index
	$$i(B_\lambda):=\frac{1}{2}\sum_{k=1}^\infty (\sgn(L^k_\lambda)-\sgn(-A))$$
	of the matrices $B_\lambda$ is well defined. We can now state the main theorem of this section, which expresses the spectral flow of the paths of Hessians $L$ in \eqref{eq:Fredholmop} in terms of this index.
	\begin{theorem}
		\label{thm:bifurctaion1}
		If the assumptions (A1)-(A4) hold and \eqref{eq:pdelinearised2} only has the trivial solution for $\lambda=0$ and $\lambda=1$, then the family of Hessians $L=\{L_\lambda\}_{\lambda\in I}$ satisfies
		
		\[\sfl(L)=i(B_1)-i(B_0).\]
	\end{theorem}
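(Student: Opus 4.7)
The plan is to exploit the orthogonal decomposition $H=\bigoplus_{k\in\mathbb{N}}H_k$ together with the axiomatic characterisation of the spectral flow. By Lemma \ref{lemma:OGProj} and the obvious fact that $T=-A$ commutes with the projections $P_k$, each $L_\lambda$ preserves this splitting; hence the restrictions $L^k=\{L^k_\lambda\}_{\lambda\in I}$ are well-defined paths of selfadjoint operators on the finite-dimensional spaces $H_k$, and are represented in the basis $\{f_ke_i\}$ by the matrices $-A-\frac{1}{\alpha_k}B_\lambda$.

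The first step is to show that only finitely many summands contribute. Continuity of $\lambda\mapsto B_\lambda$ on the compact interval $I$ gives $\sup_{\lambda\in I}\|B_\lambda\|<\infty$, and since $\alpha_k\to\infty$ one may choose $n$ so large that $\|\frac{1}{\alpha_k}B_\lambda\|<\tfrac{1}{2}$ for all $k\geq n$ and $\lambda\in I$. Since $-A$ has spectrum $\{\pm 1\}$, it follows that the eigenvalues of $L^k_\lambda$ are bounded away from $0$ uniformly in $k\geq n$ and $\lambda\in I$, so the restriction of $L$ to $H^{\geq n}:=\bigoplus_{k\geq n}H_k$ is a path of invertible selfadjoint operators. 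In particular \eqref{eq:sgn} is in force for this $n$, and property $(Z)$ yields $\sfl(L|_{H^{\geq n}})=0$. Setting $H^{<n}:=\bigoplus_{k<n}H_k$ and applying $(S)$, one obtains
\[\sfl(L)=\sfl(L|_{H^{<n}})+\sfl(L|_{H^{\geq n}})=\sfl(L|_{H^{<n}}).\]

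Since $H^{<n}$ is finite-dimensional and $L_0,L_1$ are invertible by hypothesis, the restrictions $L_0|_{H^{<n}},L_1|_{H^{<n}}$ are invertible too. Property $(N)$ combined with additivity of the Morse index under orthogonal direct sums gives
\[\sfl(L|_{H^{<n}})=\mu_{Morse}(L_0|_{H^{<n}})-\mu_{Morse}(L_1|_{H^{<n}})=\sum_{k=1}^{n-1}\bigl(\mu_{Morse}(L^k_0)-\mu_{Morse}(L^k_1)\bigr).\]
For any invertible selfadjoint $p\times p$ matrix $M$ one has the elementary identity $\mu_{Morse}(M)=\tfrac{1}{2}(p-\sgn(M))$, hence
\[\sfl(L)=\frac{1}{2}\sum_{k=1}^{n-1}\bigl(\sgn(L^k_1)-\sgn(L^k_0)\bigr).\]
Because $\sgn(L^k_\lambda)-\sgn(-A)=0$ for every $k\geq n$, the range of the sum can be extended to all of $\mathbb{N}$ and the constant $\sgn(-A)$ inserted in each term at no cost, producing exactly $i(B_1)-i(B_0)$.

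The main obstacle is this tail argument: property $(S)$ is phrased for the direct sum of two paths, so one cannot naively split $H$ into countably many invariant summands. Isolating a single finite-dimensional block whose size is governed by a uniform operator-norm bound for $B_\lambda$ circumvents this, after which $(N)$ reduces everything to a routine Morse-index versus signature computation.
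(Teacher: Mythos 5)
Your proof is correct, and structurally it mirrors the paper's: decompose $H$ into the $H_k$-blocks via Lemma \ref{lemma:OGProj}, isolate a finite-dimensional part, and reduce to Morse-index/signature arithmetic via properties $(S)$ and $(N)$. The one genuine difference is how the infinite-dimensional tail is dismissed. The paper introduces the homotopy $h(t,\lambda)=T+Q_nK_\lambda Q_n+tQ_n^\perp K_\lambda Q_n^\perp$, verifies via the operator-norm estimates \eqref{eq:strconv}--\eqref{eq: abschaetzung1} that $h(t,0),h(t,1)$ stay invertible, and invokes $(H)$ to deform $L$ into a path that is constant (hence trivially flow-free) on $H^2=(H^1)^\perp$. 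You instead observe directly that, since $\sup_\lambda\|B_\lambda\|<\infty$ and $\alpha_k\to\infty$, one can fix $n$ with $\|\alpha_k^{-1}B_\lambda\|<\tfrac12$ for all $k\ge n$; then each block $L^k_\lambda=-A-\alpha_k^{-1}B_\lambda$ is bounded below by $\tfrac12$, hence so is $L_\lambda|_{H^{\ge n}}$, which makes $L|_{H^{\ge n}}$ a path of invertibles and lets you kill the tail by $(Z)$ alone. This bypasses the homotopy $(H)$ entirely and also replaces the constant $C$ coming from invertibility of $L_0,L_1$ by an explicit uniform bound; that is a mild streamlining with no loss, since both proofs still need invertibility of $L_0,L_1$ for the restrictions to $H^{<n}$ (needed in $(S)$ and $(N)$). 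Your remark that $(S)$ cannot be iterated over countably many summands and must be applied to one finite-against-tail split is exactly the point both proofs hinge on.
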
 
	\begin{proof}
		Let $n\in \mathbb{N}$ and $H=H^1\oplus H^2$ where  $H^1:=\oplus_{k=1}^n H_k$ and $H^2:=(H^1)^\perp$. Let $P_k$, $k\in\mathbb{N}$, be the orthogonal projection in $H$ onto $H_k$. Then $Q_n:=\sum_{k=1}^n P_k$ is the orthogonal projection onto $H^1$ and $Q_n^\perp:=(I_H-Q_n)$ the orthogonal projection onto $H^2$.
		It follows from \Cref{lemma:OGProj} and \eqref{eq:Fredholmop} that
		\begin{align}
		\label{eq:proj}
		\begin{split}
		L_\lambda 
		&= T +Q_n K_{\lambda} Q_n + Q_n K_{\lambda} Q_n^\perp +Q_n^\perp K_{\lambda} Q_n + Q_n^\perp K_{\lambda} Q_n^\perp \\
		&=T+Q_n K_{\lambda} Q_n +Q_n^\perp K_{\lambda} Q_n^\perp.
		\end{split}				
		\end{align}
		From the compactness of $K_\lambda$, the strong convergence of $Q_n^\perp $ to $0$ for $n\to \infty$ and since $\Vert Q_n^\perp \Vert =1$, we see that 
		\begin{align}
		\label{eq:strconv}
		\Vert  Q_n^\perp K_{\lambda} Q_n^\perp \Vert \to 0, \  n\to \infty,
		\end{align} 
		uniformly in $\lambda$. 
		According to the assumptions, the operator $L_\lambda$ is invertible for $\lambda=0,1$. Thus there is $C>0$ such that 
		$$ \Vert L_0u\Vert \ge 3C \Vert u \Vert,\ u\in H,$$ and 
		$$\Vert L_1u\Vert \ge 3C\Vert u\Vert,\  u\in H.$$
		We now let $n_0$ be sufficiently large such that 
		
		\begin{align}
			\label{eq:abschaetzung2}
			\Vert  Q_n^\perp K_{\lambda} Q_n^\perp \Vert<C,\quad n\geq n_0.
		\end{align}
		It follows from \eqref{eq:proj} and \eqref{eq:strconv} that 
		\begin{align}
			\label{eq: abschaetzung1}
			\Vert Tu+ Q_n K_{\lambda} Q_n u \Vert \ge 2C \Vert u\Vert ,\ u\in H,\,\, \lambda=0,1,\quad n\geq n_0.
		\end{align}
		Henceforth we assume that $n_0$ is sufficiently large such that in addition to \eqref{eq: abschaetzung1} also $L_\lambda^k$ is invertible and \eqref{eq:sgn} holds for all $k\ge n_0$.\\ 
		We now consider for some $n\ge n_0$ the homotopy  
		$$h:I\times I\to \Phi_S(H)$$
		defined by  
		$$ h(t,\lambda)=T+Q_nK_{\lambda}Q_n+tQ_n^\perp K_{\lambda}Q_n^\perp.$$
		By \eqref{eq: abschaetzung1} and \eqref{eq:abschaetzung2}, we conclude that for $\lambda =0,1$  $$\Vert h(t,\lambda)u\Vert \ge C \Vert u \Vert,\ u\in H,$$ and thus $h(t,0)$ and  $h(t,1)$ are invertible for all $t\in I$. Consequently, by the properties of the spectral flow from Section \ref{section:sfl}, 
		\begin{align}
			\label{eq:sf1}
			\begin{split}
			\text{sf}(L)
			&{=}\text{sf}(L\mid_{H^1})+\text{sf}(L\mid_{H^2}){=}\text{sf}(L\mid_{H^1})\\
			&{=} \sum_{k=1}^{n}\text{sf}(L\mid_{H_k}){=}\sum_{k=1}^{n}\left(\mu_{\text{Morse}}(L_0\mid_{H_k})-\mu_{\text{Morse}}(L_1\mid_{H_k})\right).
			\end{split}				 
		\end{align}
		The signature and the Morse index of a symmetric $p\times p$-matrix $M$ are related by $$\text{sgn}(M)=p-2\mu_{\text{Morse}}(M).$$ 
		Thus we obtain in \eqref{eq:sf1}
		\begin{align}
			\label{eq:sf2}
			\begin{split}
			&\sum_{k=1}^{n}\left(\mu_{\text{Morse}}(L_0\mid_{H_k})-\mu_{\text{Morse}}(L_1\mid_{H_k})\right)\\
			&=\sum_{k=1}^n (\frac{p}{2}-\frac{1}{2}\sgn(L_0^k)-\frac{p}{2}+\frac{1}{2}\sgn(L_1^k))\\
			&=\frac{1}{2}\sum_{k=1}^n( \sgn(L_1^k)-\sgn(L_0^k)+\sgn(-A)-\sgn(-A))\\
			&=\frac{1}{2}\sum_{k=1}^n(\sgn(L_1^k)-\sgn(-A))-\frac{1}{2}\sum_{k=1}^n(\sgn(L_0^k)-\sgn(-A))\\
			&=i(B_1)-i(B_0),
			\end{split}
		\end{align}
		where we have used \eqref{eq:sgn} in the final equality. Finally the theorem follows from \eqref{eq:sf1} and \eqref{eq:sf2}.
	\end{proof}   
\noindent
From Theorem \ref{thm-bif}, we now immediately obtain the following result.

\begin{cor}\label{cor-bif}
Assume that the assumptions (A1)-(A4) hold for the boundary value problem
		\eqref{eq:pdeIntro}. If the linearised equation \eqref{eq:pdelinearised2} only has the trivial solution for $\lambda=0$ and $\lambda=1$, and 
		$$i(B_0)\not=i(B_1),$$
		then there exists a bifurcation point for \eqref{eq:pdeIntro} in $(0,1)$.
\end{cor}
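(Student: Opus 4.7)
The plan is to combine the index theorem just proved (Theorem \ref{thm:bifurctaion1}) with the abstract bifurcation criterion (Theorem \ref{thm-bif}) applied to the variational setting of Section \ref{sec:VarSet}. Since \eqref{eq:pdeIntro} is a special case of \eqref{eq:pdeGeneral} with $B_\lambda$ independent of $x$, all the constructions from Section \ref{sec:VarSet} apply, so weak solutions of \eqref{eq:pdeIntro} correspond to critical points of the functional $f$ defined in \eqref{eq:functional}, with the trivial branch $u\equiv 0$ being critical for every $\lambda\in I$.

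First I would recall that the Hessians at $0\in H$ form a continuous path $L=\{L_\lambda\}_{\lambda\in I}$ in $\Phi_S(H)$ via \eqref{eq:Fredholmop}, and that by the concluding observation of Section \ref{sec:VarSet} the kernel of $L_\lambda$ consists precisely of the weak solutions of \eqref{eq:pdelinearised2}. The hypothesis that \eqref{eq:pdelinearised2} admits only the trivial solution at the endpoints $\lambda=0,1$ therefore ensures that $L_0$ and $L_1$ are invertible, which is exactly the nondegeneracy input required to apply both Theorem \ref{thm:bifurctaion1} and Theorem \ref{thm-bif}.

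Next I would invoke Theorem \ref{thm:bifurctaion1} to obtain the identity
\[
\sfl(L)=i(B_1)-i(B_0),
\]
and observe that the assumption $i(B_0)\neq i(B_1)$ forces $\sfl(L)\neq 0$. Applying Theorem \ref{thm-bif} to the family $f$ then yields some $\lambda^\ast\in(0,1)$ such that every neighbourhood of $(\lambda^\ast,0)$ in $I\times H$ contains a pair $(\lambda,u)$ with $u\neq 0$ and $\nabla_uf(\lambda,u)=0$. Since critical points of $f_\lambda$ are weak solutions of \eqref{eq:pdeIntro}, this $\lambda^\ast$ is the required bifurcation point.

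There is no genuine obstacle here: the corollary is essentially a bookkeeping consequence of the two theorems, and the only thing one has to check carefully is that the hypotheses of both theorems are simultaneously met, which is ensured by the assumed invertibility of $L_0$ and $L_1$. In writing the proof I would keep it to a few lines, emphasising only the chain of implications rather than redoing any of the analytic work from Sections \ref{section:sfl}--\ref{sec:4}.
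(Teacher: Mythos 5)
Your argument is correct and is precisely the route the paper takes: the corollary is stated in the paper as an immediate consequence of Theorem \ref{thm-bif} combined with Theorem \ref{thm:bifurctaion1}, with the invertibility of $L_0$ and $L_1$ supplied by the nondegeneracy of \eqref{eq:pdelinearised2} at the endpoints. Nothing further is needed.
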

\noindent 
Let us note that our index $i(B_\lambda)$ and the corresponding bifurcation result in Corollary \ref{cor-bif} are of a similar form as the bifurcation theorem for autonomous Hamiltonian systems by Fitzpatrick, Pejsachowicz and Recht in \cite{SFLPejsachowiczII}. The differences are the appearance of the term $\sgn(-A)$ in the definition of the index as well as most parts of the proof of Theorem \ref{thm:bifurctaion1}, which become necessary when dealing with a PDE instead of an ODE.   
		
	\section{Bifurcation by Comparison for \eqref{eq:pdeGeneral}.}\label{Sec:Comp}
	
We now consider the general system \eqref{eq:pdeGeneral}, where the right hand side explicitly depends on $x\in U$. The aim of this section is to find a criterion for the existence of bifurcation points for \eqref{eq:pdeGeneral} by applying the comparison principle in \Cref{thm-comparison} for the  spectral flow. As introduced in Section 3, $L_\lambda$ denotes the Hessian $\nabla_u^2f_\lambda(0)$ at  the critical point $0\in H$. \newline
Let now $M=\{M_\lambda\}_{\lambda \in I}$ and $N=\{N_\lambda\}_{\lambda \in I}$ be two paths in $\Phi_S(H)$ such that
	$$\left\langle M_\lambda u,v\right\rangle_H:= \left\langle Tu,v\right\rangle _H -\int_U\left\langle C_\lambda u,v\right\rangle \text{d}x$$ and 
	$$\left\langle N_\lambda u,v\right\rangle_H:= \left\langle Tu,v\right\rangle _H -\int_U\left\langle D_\lambda u,v\right\rangle \text{d}x,$$ where $T$ is as in \eqref{eq:OperatorT}, and

	\[C_\lambda:=
		\begin{pmatrix}
		C_{1,\lambda} & 0 \\ 
		0 & C_{2,\lambda}
		\end{pmatrix}\quad\text{and}\quad 
	D_\lambda:=
		\begin{pmatrix}
		D_{1,\lambda} & 0 \\ 
		0 & D_{2,\lambda}
		\end{pmatrix},\]
		for some
		\[ C_{1,\lambda}, D_{1,\lambda}\in\Mat(p_1,\mathbb{R}),\quad C_{2,\lambda}, D_{2,\lambda}\in\Mat(p_2,\mathbb{R}),\]
	are symmetric matrices such that $B_0(x)-C_0,\, C_1-B_1(x)$ and $D_0-B_0(x),\,B_1(x)-D_1$ are positive semi-definite for all $x\in U$. Henceforth, we denote the eigenvalues of a symmetric matrix $M\in \Mat(p,\mathbb{R})$ by $\mu_i(M)$ for $i\in \{1,\dots,p\}$, where we suppose the ordering 
    $$ \mu_1(M)\le \mu_2(M)\le \dots \le \mu_p(M).$$
The following theorem is the main result of this work. As in the previous section, $(\alpha_k)_{k\in\mathbb{N}}$ is the increasing series of Dirichlet eigenvalues of the domain $U$ that we defined in \eqref{eq:ewp}. 

	\begin{theorem}
		\label{thm-BifComp}
		Assume that the assumptions (A1)-(A4) hold for the boundary value problem \eqref{eq:pdeGeneral} and that the linearised equation \eqref{eq:pdelinearised} only has the trivial solution for $\lambda=0$ and $\lambda=1$. 
		\begin{itemize}
			\item [(i)] 
				If $C_{1,0}\ge C_{1,1},\, C_{2,0}\ge C_{2,1}$ and there exists $j\in\{1,\dots,p_1\}$ such that $$\mu_j(C_{1,1})< \alpha_k < \mu_j(C_{1,0})\ \text{for some }k\in \mathbb{N}$$ or there exists $j\in\{1,\dots,p_2\}$ such that $$\mu_j(C_{2,1})< -\alpha_k < \mu_j(C_{2,0})\ \text{for some }k\in \mathbb{N},$$
				then there is a bifurcation point for \eqref{eq:pdeGeneral}.
			\item [(ii)] 
				If $D_{1,1}\ge D_{1,0},\, D_{2,1}\ge D_{2,0}$ and there exists $j\in \{1,\dots,p_1\}$ such that $$\mu_j(D_{1,0})< \alpha_k < \mu_j(D_{1,1})\ \text{for some }k\in \mathbb{N}$$ or there exists $j\in\{1,\dots,p_2\}$ such that $$\mu_j(D_{2,0})< -\alpha_k < \mu_j(D_{2,1})\ \text{for some }k\in \mathbb{N},$$
				then there is a bifurcation point for \eqref{eq:pdeGeneral}.
		\end{itemize}
	\end{theorem}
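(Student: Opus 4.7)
The plan is to bracket $\sfl(L)$ between the spectral flows of the two auxiliary paths $M$ and $N$ via the comparison principle of Theorem \ref{thm-comparison}, to evaluate these auxiliary flows explicitly by the index formula of Theorem \ref{thm:bifurctaion1}, and to extract a strict sign from the Dirichlet-eigenvalue crossing assumption. Once $\sfl(L)\neq 0$ is established, Theorem \ref{thm-bif} produces the bifurcation point in $(0,1)$.

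First I would set up the comparison. All three families $L_\lambda,M_\lambda,N_\lambda$ are compact perturbations of the invertible operator $T$ from \eqref{eq:OperatorT}, so in particular $L_\lambda-M_\lambda$ and $L_\lambda-N_\lambda$ are compact. A direct computation gives
\[ \langle (L_\lambda-M_\lambda)u,u\rangle_H=\int_U\langle (C_\lambda-B_\lambda(x))u(x),u(x)\rangle\,\text{d}x,\]
and analogously for $N_\lambda$, so the positive semi-definiteness hypotheses translate at once into the endpoint inequalities $L_0\leq M_0$, $L_1\geq M_1$ and $L_0\geq N_0$, $L_1\leq N_1$. Up to an arbitrarily small shift of the form $C_0\mapsto C_0-\varepsilon I$, $C_1\mapsto C_1+\varepsilon I$ and the analogous modification of $D$, which preserves all PSD and strict crossing conditions, I may assume that $M_0,M_1,N_0,N_1$ are invertible. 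Applying Theorem \ref{thm-comparison}, after swapping the roles of $L$ and $M$ in case (i), yields
\[ \sfl(M)\leq \sfl(L)\quad\text{in case (i)},\qquad \sfl(L)\leq \sfl(N)\quad\text{in case (ii)}.\]

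Since $C_\lambda$ and $D_\lambda$ do not depend on $x$, Theorem \ref{thm:bifurctaion1} applies to $M$ and $N$ and gives $\sfl(M)=i(C_1)-i(C_0)$ and $\sfl(N)=i(D_1)-i(D_0)$. For a symmetric block-diagonal matrix $B=\diag(B^{(1)},B^{(2)})$ with blocks of sizes $p_1$ and $p_2$, the matrix $L^k=-A-\tfrac{1}{\alpha_k}B$ is itself block-diagonal, and counting positive versus negative eigenvalues block by block shows that, whenever $\alpha_k$ is not an eigenvalue of $B^{(1)}$ and $-\alpha_k$ is not an eigenvalue of $B^{(2)}$,
\[ \sgn(L^k)-\sgn(-A)=-2\,\#\{j:\mu_j(B^{(1)})>\alpha_k\}+2\,\#\{j:\mu_j(B^{(2)})<-\alpha_k\}.\]
Summing over $k$ and swapping the order of summation collapses this to
\[ i(B)=-\sum_{j=1}^{p_1}N(\mu_j(B^{(1)}))+\sum_{j=1}^{p_2}N(-\mu_j(B^{(2)})),\qquad N(t):=\#\{k\in\mathbb{N}:\alpha_k<t\}.\]

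Finally I would extract the strict sign. In case (i), the hypotheses $C_{1,0}\geq C_{1,1}$ and $C_{2,0}\geq C_{2,1}$ combined with Weyl's monotonicity of eigenvalues under the matrix order give $\mu_j(C_{1,0})\geq\mu_j(C_{1,1})$ and $\mu_j(C_{2,0})\geq\mu_j(C_{2,1})$ for every $j$. Since $N$ is non-decreasing, every summand of
\[ \sfl(M)=\sum_{j=1}^{p_1}\bigl[N(\mu_j(C_{1,0}))-N(\mu_j(C_{1,1}))\bigr]+\sum_{j=1}^{p_2}\bigl[N(-\mu_j(C_{2,1}))-N(-\mu_j(C_{2,0}))\bigr]\]
is non-negative, and the strict crossing hypothesis contributes at least $1$ to the relevant summand: if $\mu_j(C_{1,1})<\alpha_k<\mu_j(C_{1,0})$ then $N(\mu_j(C_{1,0}))\geq k$ while $N(\mu_j(C_{1,1}))\leq k-1$, and the $-\alpha_k$ alternative is analogous. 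Thus $\sfl(L)\geq\sfl(M)\geq 1$. Case (ii) is symmetric: the reversed monotonicity and reversed crossing yield $\sfl(L)\leq\sfl(N)\leq -1$. In either case $\sfl(L)\neq 0$ and Theorem \ref{thm-bif} produces the claimed bifurcation point. The main obstacle I anticipate is the careful sign bookkeeping in deriving the closed formula for $i(B)$ from the block decomposition, together with the verification that the small perturbation of the endpoints of $M$ and $N$ needed to secure their invertibility is compatible with all PSD and strict crossing conditions simultaneously.
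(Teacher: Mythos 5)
Your overall route is exactly the one the paper takes: you use compactness of $L_\lambda - M_\lambda$ and $L_\lambda - N_\lambda$, the endpoint inequalities $N_0 \le L_0 \le M_0$, $M_1 \le L_1 \le N_1$, Theorem~\ref{thm-comparison} to sandwich $\sfl(L)$, Theorem~\ref{thm:bifurctaion1} to evaluate $\sfl(M)$ and $\sfl(N)$ because $C_\lambda, D_\lambda$ are constant in $x$, Weyl's inequality to get $\mu_j(C_{1,0})\ge\mu_j(C_{1,1})$ (and its analogues), and the strict crossing to produce a strict sign. Your reformulation of the index by swapping the order of summation,
\[
i(B) = -\sum_{j=1}^{p_1} N\bigl(\mu_j(B^{(1)})\bigr) + \sum_{j=1}^{p_2} N\bigl(-\mu_j(B^{(2)})\bigr),\qquad N(t):=\#\{k:\alpha_k<t\},
\]
is a genuinely cleaner presentation than the paper's per-$k$ sum \eqref{eq: sf-pathM2}; it is equivalent, but it makes the Weyl-monotonicity step and the extraction of the strict sign transparent on a per-eigenvalue basis rather than via a comparison of eigenvalue counts for each $k$. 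This is a worthwhile simplification.

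There is, however, a genuine gap in the step where you claim that the shift $C_0\mapsto C_0-\varepsilon I$, $C_1\mapsto C_1+\varepsilon I$ ``preserves all PSD and strict crossing conditions.'' It does preserve $B_0(x)-C_0\ge 0$ and $C_1-B_1(x)\ge 0$ (indeed it strengthens them), and it preserves the strict crossing for small $\varepsilon$. But it does \emph{not} preserve the hypothesis $C_{1,0}\ge C_{1,1}$ (resp.\ $C_{2,0}\ge C_{2,1}$): after the shift one needs $C_{1,0}-C_{1,1}\ge 2\varepsilon I$, which fails whenever $C_{1,0}-C_{1,1}$ is merely positive semi-definite with a nontrivial kernel. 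Concretely, if $\mu_j(C_{1,0})=\mu_j(C_{1,1})=\alpha_l$ for some $j,l$, then the $j$-th summand in your formula for $\sfl(M')$ changes from $0$ to $N(\alpha_l-\varepsilon)-N(\alpha_l+\varepsilon)=-m$ with $m$ the multiplicity of $\alpha_l$, so the non-negativity of the summands — and hence the conclusion $\sfl(M')\ge 1$ — is no longer guaranteed. The fix is not immediate, because shifting $C_0$ and $C_1$ in the same direction to preserve the eigenvalue ordering then violates one of the two PSD hypotheses $B_0(x)-C_0\ge 0$ or $C_1-B_1(x)\ge 0$. It is worth noting that the paper's own proof does not address the invertibility of $M_0,M_1,N_0,N_1$ at all before invoking Theorem~\ref{thm:bifurctaion1}, so you are flagging a real issue in the argument; only your proposed patch for it is not correct as stated.
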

	\begin{proof}
		Since 
		$$\left\langle (L_\lambda-M_\lambda) u,v\right\rangle _H = \int_U \left\langle (C_\lambda -B_\lambda(x))u(x),v(x)\right\rangle \text{d}x,\ u,v\in H$$ and 
		$$\left\langle (L_\lambda-N_\lambda) u,v\right\rangle _H = \int_U \left\langle (D_\lambda -B_\lambda(x))u(x),v(x)\right\rangle \text{d}x,\ u,v\in H,$$ 
		we note that $L_\lambda-M_\lambda$ and $L_\lambda-N_\lambda$ are compact for all $\lambda \in I$ (see \eqref{eq:OperatorK}). Furthermore, as $B_0(x)-C_0,\, C_1-B_1(x)$ and $D_0-B_0(x),\,B_1(x)-D_1$ are positive semi-definite for all $x\in U$ by assumption, it follows that $N_0\le L_0\le M_0 $ and $M_1\le L_1\le N_1 $. Thus we obtain from \Cref{thm-comparison} that 
		$$\sfl(M)\le \sfl(L)\le \sfl(N).$$
		If we now can prove that $\sfl(M)>0 $ under the assumptions of $(i)$, and $\sfl(N)<0$ under the assumptions of $(ii)$, then the assertion follows from Theorem \ref{thm-bif}.\newline
		We only prove the assertion for $M$, as the argument for $N$ is very similar. If we apply the results of Section 4 to the matrices $C_\lambda$, we obtain that there is a decomposition of $H$ into finite dimensional subspaces $H_k$, $k\in \mathbb{N}$, such that each $H_k$ reduces $M_\lambda$ and the corresponding restrictions $M_\lambda ^k$ are represented by the matrices 
		\begin{align*}
			M_\lambda^k = 
			\begin{pmatrix}
				I_{p_1\times p_1} &  & 0 \\
				0 &  & -I_{p_2\times p_2}
			\end{pmatrix}
			-\frac{1}{\alpha_k}C_{\lambda},\, \lambda \in I.
		\end{align*}
		Now it follows from Theorem \ref{thm:bifurctaion1} that 
		\begin{align}
			\label{eq: sf-pathM}
			\begin{split}
			\sfl(M)&=i(C_1)-i(C_0)\\
			&= \frac{1}{2} \sum_{k=1}^{\infty} (\sgn(M_1^k)-\sgn(-A))-\frac{1}{2}\sum_{k=1}^\infty (\sgn(M_0^k)-\sgn(-A))\\
			&=\frac{1}{2} \sum_{k=1}^{\infty} (\sgn(M_1^k)-\sgn(M_0^k))\\
			&=\sum_{k=1}^{\infty} \left(\mu_{\text{Morse}}(M^k_0)-\mu_{\text{Morse}}(M^k_1)\right).
			\end{split}
		\end{align}
		Since the matrices 
			$\begin{pmatrix}
			I_{p_1\times p_1} &  & 0 \\
			0 &  & -I_{p_2\times p_2}
			\end{pmatrix}$ 
		and $-\frac{1}{\alpha_k}C_{\lambda}$ commute for all $\lambda \in I$, the eigenvalues of $M_\lambda^k$ are the sum of the eigenvalues of 			
			$\begin{pmatrix}
			I_{p_1\times p_1} &  & 0 \\
			0 &  & -I_{p_2\times p_2}
			\end{pmatrix}$ 
		and $-\frac{1}{\alpha_k}C_{\lambda}$.
		Moreover, note that
		$$\sigma(C_\lambda)=\sigma(C_{1,\lambda}) \cup \sigma(C_{2,\lambda}).$$
		Now, for any $k\in\mathbb{N}$, $1-\frac{\mu_i(C_{1,\lambda})}{\alpha_k}<0$ if and only if $\alpha_k< \mu_i(C_{1,\lambda})
		$  for  $i\in \{1,\dots,p_1\}$, and $-1-\frac{\mu_i(C_{2,\lambda})}{\alpha_k}<0$ if and only if $-\alpha_k< \mu_i(C_{2,\lambda})$ for  $i\in\{1,\dots,p_2\}$. Thus we obtain 
		$$\mu_{\text{Morse}}(M_\lambda ^k)=\vert (\alpha_k,\infty)\cap \sigma (C_{1,\lambda})\vert +\vert (-\alpha_k,\infty)\cap \sigma (C_{2,\lambda})\vert,$$
where here $|\cdot|$ stands for the cardinality of a set. Plugging this into \eqref{eq: sf-pathM} yields 
		\begin{align}
			\label{eq: sf-pathM2}
			\begin{split}
			\sfl(M)=&\sum_{k=1}^\infty (\vert (\alpha_k,\infty)\cap \sigma (C_{1,0})\vert +\vert (-\alpha_k,\infty)\cap \sigma (C_{2,0})\vert \\& -\vert (\alpha_k,\infty)\cap \sigma (C_{1,1})\vert -\vert (-\alpha_k,\infty)\cap \sigma (C_{2,1})\vert).
			\end{split}
		\end{align}
		Since $C_{1,0}\ge C_{1,1}$ and $C_{2,0}\ge C_{2,1}$, the matrices $C_{1,0}-C_{1,1}$ and $C_{2,0}-C_{2,1}$ are positive semi-definite. Thus it follows from  Weyl's inequality for symmetric matrices that
		\begin{align*}
			\mu_i(C_{1,1})\le \mu_i(C_{1,1}+(C_{1,0}-C_{1,1}))=\mu_i(C_{1,0})\ \text{for all}\ i\in\{1,\dots,p_1\}
		\end{align*}
		and 
		\begin{align*}
			\mu_i(C_{2,1})\le \mu_i(C_{2,1}+(C_{2,0}-C_{2,1}))=\mu_i(C_{2,0})\ \text{for all}\ i\in\{1,\dots,p_2\}.
		\end{align*}
		Consequently, for each $k\in\mathbb{N}$,
		\begin{align*}
			\vert (\alpha_k,\infty)\cap \sigma (C_{1,0})\vert&\geq \vert (\alpha_k,\infty)\cap \sigma (C_{1,1})\vert\\
			\vert (-\alpha_k,\infty)\cap \sigma (C_{2,0})\vert&\geq\vert (-\alpha_k,\infty)\cap \sigma (C_{2,1})\vert)
		\end{align*}
		as $(\alpha_k)_{k\in\mathbb{N}}$ is an increasing sequence. Thus we see from \eqref{eq: sf-pathM2} that $\sfl(M)\geq 0$, and that actually $\sfl(M)>0$ if there exists $j\in\{1,\dots,p_1\}$ such that $$\mu_j(C_{1,1})< \alpha_k < \mu_j(C_{1,0})\, \text{for some }k\in \mathbb{N}$$ or there exists $j\in\{1,\dots,p_2\}$ such that $$\mu_j(C_{2,1})< -\alpha_k < \mu_j(C_{2,0})\, \text{for some }k\in \mathbb{N}.$$
This finishes the proof.
	\end{proof} 
\noindent
Note that the assumptions of Theorem \ref{thm-BifComp} imply that either $B_0(x)\ge B_1(x)$ or $B_1(x)\ge B_0(x)$ for all $x\in U$. Let us also point out that the proof of Theorem \ref{thm-BifComp} yields the following spectral flow formula in the setting of Section \ref{sec:4}, which significantly simplifies the index formula in Theorem \ref{thm:bifurctaion1} if the matrices $B_\lambda$ are block diagonal.

\begin{cor}
If under the assumptions of Theorem \ref{thm:bifurctaion1}, the matrices $B_\lambda$ are of the form

\[B_\lambda=\begin{pmatrix}
		C_{1,\lambda} & 0 \\ 
		0 & C_{2,\lambda}
		\end{pmatrix}\]
for symmetric matrices $C_{1,\lambda}\in\Mat(p_1,\mathbb{R})$ and $C_{2,\lambda}\in\Mat(p_2,\mathbb{R})$, then

\begin{align}
				\nonumber
				\sfl(L)=&\sum_{k=1}^\infty (\vert (\alpha_k,\infty)\cap \sigma (C_{1,0})\vert +\vert (-\alpha_k,\infty)\cap \sigma (C_{2,0})\vert \\
				\nonumber& -\vert (\alpha_k,\infty)\cap \sigma (C_{1,1})\vert -\vert (-\alpha_k,\infty)\cap \sigma (C_{2,1})\vert)
			\end{align}

\end{cor}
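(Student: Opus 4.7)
The plan is to observe that when $B_\lambda$ has the stated block-diagonal form, the Hessian path $L$ is literally of the form of the auxiliary path $M$ used in the proof of Theorem \ref{thm-BifComp} (take $C_\lambda = B_\lambda$). So the calculation carried out there to evaluate $\sfl(M)$ transcribes verbatim to $\sfl(L)$ and produces exactly the claimed formula.

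Concretely, I would first invoke Theorem \ref{thm:bifurctaion1}, together with the intermediate identity \eqref{eq:sf1} inside its proof, to write
\[\sfl(L) = \sum_{k=1}^{\infty} \bigl(\mu_{\text{Morse}}(L_0^k) - \mu_{\text{Morse}}(L_1^k)\bigr),\]
where $L_\lambda^k = -A - \frac{1}{\alpha_k} B_\lambda$ acts on $H_k$. That the sum is well-defined (i.e.\ all but finitely many summands vanish) is exactly what \eqref{eq:sgn} provides, since $L_\lambda^k$ has signature $\sgn(-A)$ for all $k$ beyond some $n_0$, and the Morse index of $L_\lambda^k$ thus stabilises to $p_2$ for $\lambda = 0, 1$.

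Next, I would compute each Morse index. The block-diagonal assumption on $B_\lambda$ means that $-A$ and $B_\lambda$ respect the common decomposition $\mathbb{R}^p = \mathbb{R}^{p_1}\oplus\mathbb{R}^{p_2}$ and therefore commute. Consequently, the spectrum of $L_\lambda^k$ is the union of $\{1 - \mu_i(C_{1,\lambda})/\alpha_k : 1 \le i \le p_1\}$ and $\{-1 - \mu_i(C_{2,\lambda})/\alpha_k : 1 \le i \le p_2\}$. The first of these is negative exactly when $\alpha_k < \mu_i(C_{1,\lambda})$, and the second exactly when $-\alpha_k < \mu_i(C_{2,\lambda})$, giving
\[\mu_{\text{Morse}}(L_\lambda^k) = |(\alpha_k,\infty)\cap\sigma(C_{1,\lambda})| + |(-\alpha_k,\infty)\cap\sigma(C_{2,\lambda})|.\]
Substituting this into the preceding displayed identity yields the asserted formula. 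There is no genuine obstacle: the proof is a direct transcription of the block-diagonal analysis already performed for $M$ within the proof of Theorem \ref{thm-BifComp}, with $C_\lambda$ replaced throughout by $B_\lambda$.
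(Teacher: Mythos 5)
Your proposal is correct and matches the paper's intent exactly: the paper presents this corollary as an immediate byproduct of the block-diagonal computation of $\sfl(M)$ in the proof of Theorem \ref{thm-BifComp}, and your argument simply specialises that computation to $C_\lambda = B_\lambda$, i.e.\ $M_\lambda = L_\lambda$. No gaps, and no genuinely different route is taken.
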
 
\noindent
We now set for $\lambda \in I$			
		
		$$ \gamma_\lambda:=\underset{x\in U}{\sup} \{\mu_1(B_\lambda(x)),...,\mu_p(B_\lambda(x))\},$$
			$$ \beta_\lambda:=\underset{x\in U}{\inf} \{\mu_1(B_\lambda(x)),...,\mu_p(B_\lambda(x))\},$$
and point out that the following simple corollary of Theorem \ref{thm-BifComp} is the best possible result that can be obtained by the comparison methods in \cite{BifJac} and \cite{Edinburgh}.

\begin{cor}		
	\label{cor:eigcomp}		
Under the assumptions of Theorem \ref{thm-BifComp},	
			
			\begin{enumerate}
				\item[(i)]
				if $\beta_0> \gamma_1$ and there exists $k\in \mathbb{N}$ such that
				$$ \gamma_1 <\alpha_k<\beta_0\ \text{or}\ \gamma_1<-\alpha_k<\beta_0$$
				then there is a bifurcation point for \eqref{eq:pdeGeneral}, 
				\item[(ii)]
				If $\beta_1> \gamma_0$ and there exists $k\in \mathbb{N}$ such that
				$$ \gamma_0 <\alpha_k<\beta_1\ \text{or}\ \gamma_0<-\alpha_k<\beta_1$$
				then there is a bifurcation point for \eqref{eq:pdeGeneral}. 
			\end{enumerate}
\end{cor}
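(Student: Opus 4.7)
The plan is to deduce both parts of the corollary directly from Theorem~\ref{thm-BifComp} by choosing the comparison matrices $C_\lambda$ and $D_\lambda$ to be constant scalar multiples of the identity, so that all spectra collapse to a single value and the hypotheses on $\mu_j$ reduce to the hypotheses stated in terms of $\beta_\lambda$ and $\gamma_\lambda$. Concretely, for part (i) I would set $C_0 := \beta_0 \, I_p$ and $C_1 := \gamma_1 \, I_p$, which automatically have the required block-diagonal structure with $C_{1,\lambda} = \beta_\lambda \,I_{p_1}$ and $C_{2,\lambda} = \beta_\lambda \,I_{p_2}$ (with $\beta_1$ replaced by $\gamma_1$ where appropriate). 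For part (ii), I would take $D_0 := \gamma_0 \,I_p$ and $D_1 := \beta_1 \,I_p$.

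The next step is to verify the three hypotheses of Theorem~\ref{thm-BifComp} for these choices. First, the positive semi-definiteness of $B_0(x) - C_0$ and $C_1 - B_1(x)$ follows immediately from the definitions: by construction $\mu_1(B_0(x)) \geq \beta_0$ and $\mu_p(B_1(x)) \leq \gamma_1$ for every $x \in U$, so the claimed inequalities of matrices hold pointwise. Second, the monotonicity $C_{1,0} \geq C_{1,1}$ and $C_{2,0} \geq C_{2,1}$ reduces to the single scalar inequality $\beta_0 \geq \gamma_1$, which is the standing assumption $\beta_0 > \gamma_1$ in (i). Third, since every eigenvalue of $C_{1,0}$ equals $\beta_0$ and every eigenvalue of $C_{1,1}$ equals $\gamma_1$ (and analogously for $C_{2,\lambda}$), the crossing condition $\mu_j(C_{1,1}) < \alpha_k < \mu_j(C_{1,0})$ collapses, for every admissible $j$, to $\gamma_1 < \alpha_k < \beta_0$, and the $C_2$-condition collapses to $\gamma_1 < -\alpha_k < \beta_0$; these are precisely the alternatives in the hypothesis of Corollary~\ref{cor:eigcomp}(i).

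Part (ii) is entirely analogous: with $D_0 = \gamma_0 \, I_p$ and $D_1 = \beta_1 \,I_p$, the semi-definiteness of $D_0 - B_0(x)$ and $B_1(x) - D_1$ follows from the definitions of $\gamma_0$ and $\beta_1$, the monotonicity $D_{j,1} \geq D_{j,0}$ reduces to $\beta_1 \geq \gamma_0$, and the crossing condition in Theorem~\ref{thm-BifComp}(ii) becomes $\gamma_0 < \pm \alpha_k < \beta_1$ as required.

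There is no real obstacle in this argument; the only care to be taken is the bookkeeping between the two blocks, since the $p_1$-block of $T$ contributes the $+\alpha_k$ alternative (via the factor $I_{p_1 \times p_1}$ in $L_\lambda^k$) while the $p_2$-block contributes the $-\alpha_k$ alternative (via $-I_{p_2 \times p_2}$). In degenerate cases $p_1 = 0$ or $p_2 = 0$ only the corresponding alternative is available, which matches the either/or formulation of the statement.
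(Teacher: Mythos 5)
Your proof is correct and matches the paper's approach exactly: the paper also takes $C_\lambda$ (resp.\ $D_\lambda$) to be scalar multiples of the identity with endpoints $\beta_0 I_p$, $\gamma_1 I_p$ (resp.\ $\gamma_0 I_p$, $\beta_1 I_p$), using the linear interpolation $C_\lambda=(\beta_0+\lambda(\gamma_1-\beta_0))I_{p\times p}$, and then invokes Theorem~\ref{thm-BifComp}. Your verification is more detailed than the one-line proof in the paper, but the underlying idea is identical.
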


\begin{proof}
We just set $C_\lambda =(\beta_0+\lambda(\gamma_1-\beta_0))I_{p\times p}$ and  $D_\lambda:=(\gamma_0+\lambda(\beta_1-\gamma_0))I_{p\times p}$ in \Cref{thm-BifComp}. 
\end{proof}			
\noindent
We now provide an example which shows that our main Theorem \ref{thm-bif} indeed is stronger than the previously known comparison methods from \cite{BifJac} and \cite{Edinburgh}. Consider the system \eqref{eq:pdeIntro} in the special case that $U=(0,\pi)$ and $p_1=1=p_2$, i.e. the systems \eqref{eq:pdeIntro} are of the form 
 	\begin{align}
 		\label{eq:pdeIntro2}
 		\left\{
 		\begin{array}{rl}
 			- u''_1(x) &= \frac{\partial F}{\partial u_1}(\lambda,u(x))\ \hspace*{0.25cm} \mathrm{in}\ (0,\pi)\\		
 			 u''_2(x) &= \frac{\partial F}{\partial u_2}(\lambda,u(x))\ \hspace*{0.25cm} \mathrm{in}\ (0,\pi)\\	
 			u(0) &= u(\pi)=0,
 		\end{array}
 		\right.
 	\end{align}
 	and we also assume that 
 	$$B_0:=\nabla_u^2 F(0,0)=
 	\begin{pmatrix}
 		8 & -2 \\
 		-2 & 5
 	\end{pmatrix} \text{ and } B_1:=\nabla_u^2F(1,0)=
 	\begin{pmatrix}
 	-3 & 1 \\
 	1 & 2
	\end{pmatrix}.$$
 Note that the Dirichlet eigenvalues of the domain $U=(0,\pi)$ are $\alpha_k=k^2.$  
	We now choose $C_\lambda=(\beta_0+\lambda(\gamma_1-\beta_0))I_{2\times 2}$ as in Corollary \ref{cor:eigcomp}, where $\beta_0=4$ is the smallest eigenvalue of $B_0$ and $\gamma_1=\frac{\sqrt{29}-1}{2}$ is the largest eigenvalue of $B_1$.
	Now there is no $k\in \mathbb{N}$ such that $\gamma_1<\alpha_k< \beta_0$, and thus we cannot use \Cref{cor:eigcomp} to investigate bifurcation points for \eqref{eq:pdeIntro2}. 
	However, if $$C_0=\begin{pmatrix}
		C_{1,0} & 0 \\ 
		0 & C_{2,0}
	\end{pmatrix}=\begin{pmatrix}
		5 & 0 \\
		0 & 3
	\end{pmatrix} \text{ and } C_1=	\begin{pmatrix}
	C_{1,1} & 0 \\ 
	0 & C_{2,1}
\end{pmatrix}=\begin{pmatrix}
	1 & 0 \\
	0 & 3
\end{pmatrix}$$ in \Cref{thm-BifComp}, we get the existence of a bifurcation point for \eqref{eq:pdeIntro2} as 
$$\mu(C_{1,1})=1<\alpha_2=4<5=\mu(C_{1,0}).$$
We finish this section by considering for later applications the special case that $p=2$ and $p_1=1=p_2$, i.e. the systems \eqref{eq:pdeGeneral} are of the form 
		\begin{align}
			\label{eq:pdeGeneral2}
			\left\{
			\begin{array}{rl}
				-\Delta u_1(x) &= \nabla_{u_1} F(\lambda,x,u(x))\ \hspace*{0.25cm} \mathrm{in}\ U\\		
				\Delta u_2(x) &= \nabla_{u_2} F(\lambda,x,u(x))\ \hspace*{0.25cm} \mathrm{in}\ U\\	
				u(x) &= 0\ \hspace*{2.5cm}  \mathrm{on}\ \partial U,
			\end{array}
			\right.
		\end{align}
		where now again $U\subseteq \mathbb{R}^N$ is a bounded smooth domain for some $N\in \mathbb{N}$.
		According to assumption $(A2)$, we set 
		$$B_\lambda(x):=
		\begin{pmatrix}
			b_{11}(\lambda,x)& b_{12}(\lambda,x)  \\
			b_{12}(\lambda,x)	& b_{22}(\lambda,x) 
		\end{pmatrix}$$
		and assume that the corresponding linearised system \eqref{eq:pdelinearised2} with $A=\diag(-1,1)$ has only the trivial solution for $\lambda=0$ and $\lambda=1$.
		If we now set 
		$C_\lambda:=
		\begin{pmatrix}
			c_{1\lambda}& 0  \\
			0	& c_{2\lambda} 
		\end{pmatrix}\in \Mat(2,\mathbb{R})$ 
		and 
		$D_\lambda:=
		\begin{pmatrix}
			d_{1\lambda}& 0  \\
			0	& d_{2\lambda} 
		\end{pmatrix}\in \Mat(2,\mathbb{R})$,
		we can apply \Cref{thm-BifComp} and immediately get that there is a bifurcation point for \eqref{eq:pdeGeneral2} under any of the following assumptions: 
		\begin{enumerate}
			\item[(i)]
			there are $c_{10},c_{11},c_{20},c_{21}\in \mathbb{R}$ such that
			
			$$ \underset{x\in \overline{U}}{\min}\ b_{22}(0,x)-\underset{x\in \overline{U}}{\max}\ \vert b_{12}(0,x) \vert \ge c_{20} \ge c_{21}\ge  \underset{x\in \overline{U}}{\max}\ b_{22}(1,x)+\underset{x\in \overline{U}}{\max}\ \vert b_{12}(1,x) \vert $$
			
			and there exists $k\in \mathbb{N}$ such that
			\begin{align}
				\nonumber
				\underset{x\in \overline{U}}{\min}\ b_{11}(0,x)-\underset{x\in \overline{U}}{\max}\ \vert b_{12}(0,x) \vert \ge c_{10} > \alpha_k > c_{11}\ge   \underset{x\in \overline{U}}{\max}\ b_{11}(1,x)+\underset{x\in \overline{U}}{\max}\ \vert b_{12}(1,x) \vert. 
			\end{align}
			\item[(ii)] 
			there are $c_{10},c_{11},c_{20},c_{21}\in \mathbb{R}$ such that 
			
			$$ \underset{x\in \overline{U}}{\max}\ b_{11}(0,x)-\underset{x\in \overline{U}}{\max}\ \vert b_{12}(0,x) \vert \ge c_{10} \ge c_{11}\ge  \underset{x\in \overline{U}}{\max}\ b_{11}(1,x)+\underset{x\in \overline{U}}{\max}\ \vert b_{12}(1,x) \vert $$
			
			and there exists $k\in \mathbb{N}$ such that
			\begin{align}
				\nonumber
				\underset{x\in \overline{U}}{\min}\ b_{22}(0,x)-\underset{x\in \overline{U}}{\max}\ \vert b_{12}(0,x) \vert \ge c_{20} > -\alpha_k>  c_{21} \ge	\underset{x\in \overline{U}}{\max}\ b_{22}(1,x)+\underset{x\in \overline{U}}{\max}\ \vert b_{12}(1,x) \vert. 
			\end{align}
			\item[(iii)] 
			there are $d_{10},d_{11},d_{20},d_{21}\in \mathbb{R}$ such that 
			
			$$ \underset{x\in \overline{U}}{\min}\ b_{22}(1,x)-\underset{x\in \overline{U}}{\max}\ \vert b_{12}(1,x) \vert \ge d_{21}\ge d_{20}\ge  \underset{x\in \overline{U}}{\max}\ b_{22}(0,x)+\underset{x\in \overline{U}}{\max}\ \vert b_{12}(0,x) \vert $$
			
		 and there exists $k\in \mathbb{N}$ such that
			\begin{align}
				\nonumber
				\underset{x\in \overline{U}}{\min}\ b_{11}(1,x)-\underset{x\in \overline{U}}{\max}\ \vert b_{12}(1,x) \vert \ge d_{11} > \alpha_k>d_{10}
				\ge \underset{x\in \overline{U}}{\max}\ b_{11}(0,x)+\underset{x\in \overline{U}}{\max}\ \vert b_{12}(0,x) \vert. 
			\end{align}
			\item[(iv)] 
			there are $d_{10},d_{11},d_{20},d_{21}\in \mathbb{R}$ such that 
			
			$$ \underset{x\in \overline{U}}{\min}\ b_{11}(1,x)-\underset{x\in \overline{U}}{\max}\ \vert b_{12}(1,x) \vert \ge d_{11} \ge d_{10}\ge  \underset{x\in \overline{U}}{\max}\ b_{11}(0,x)+\underset{x\in \overline{U}}{\max}\ \vert b_{12}(0,x) \vert $$
			
			and there exists $k\in \mathbb{N}$ such that
			\begin{align}
				\nonumber
				\underset{x\in \overline{U}}{\min}\ b_{22}(1,x)-\underset{x\in \overline{U}}{\max}\ \vert b_{12}(1,x) \vert \ge d_{21} > -\alpha_k> d_{20} 
				\ge \underset{x\in \overline{U}}{\max}\ b_{22}(0,x)+\underset{x\in \overline{U}}{\max}\ \vert b_{12}(0,x) \vert.  
			\end{align}	 
		\end{enumerate}
 

\section{Application to Bifurcation on Shrinking Domains}
The aim of this section is to apply the results of the previous sections to a setting that has been investigated in \cite{AleIchDomain}, \cite{AleBall}, \cite{ProcHan} and in particular in \cite{AleSmaleIndef}. We consider the system 

\begin{align}
\label{eq:pdeshrinking}
		\left\{
		\begin{array}{rl}
			A\Delta u(x) &= \nabla_u F(x,u(x))\ \hspace*{0.25cm} \mathrm{in}\ U\\		
			u(x) &= 0\ \hspace*{2cm}  \mathrm{on}\ \partial U,
		\end{array}
		\right.
\end{align}
that does not depend on a parameter $\lambda$, and where now $U\subset\mathbb{R}^N$ is a bounded smooth domain that is star-shaped with respect to $0$. The matrix $A$ is as in $(A4)$ in Section \ref{sec:VarSet}, and we assume that $0$ is a critical point of $F(x,\cdot):\mathbb{R}^p\rightarrow\mathbb{R}$ for all $x\in U$ and henceforth set 
	
	\begin{align}\label{Bshrunk}
	B(x):=\nabla^2_uF(x,0).
	\end{align}
	Below we will also once again need the linearised system
	
	\begin{align}
\label{eq:pdeshrinkinglin}
		\left\{
		\begin{array}{rl}
			A\Delta u(x) &= B(x)u(x)\ \hspace*{0.25cm} \mathrm{in}\ U\\		
			u(x) &= 0\ \hspace*{1.5cm}  \mathrm{on}\ \partial U.
		\end{array}
		\right.
\end{align}
	Our final standing assumption is as in $(A3)$
	
	\begin{itemize}	
\item		 There exist $C>0$ and $1\le s< (N+2)(N-2)^{-1}$ if $N\geq 3$ such that 
		$$\vert\nabla^2_u F(x,u)\vert\le C(1+\vert u\vert^{s-1})\,.$$ 
		If $N=2$, we instead require that $s\in [1,\infty)$, and if $N=1$ we do not impose any growth condition.
		\end{itemize}
As $U$ is star-shaped with respect to $0$, it makes sense to consider \eqref{eq:pdeshrinking} on the shrunk domains
	
	\[U_r:=\{r\, x:\, x\in U\},\quad r\in(0,1],\]
i.e.

\begin{align}
\label{eq:pdeshrinked}
		\left\{
		\begin{array}{rl}
			A\Delta u(x) &= \nabla_u F(x,u(x))\ \hspace*{0.25cm} \mathrm{in}\ U_r\\		
			u(x) &= 0\ \hspace*{2.0cm}  \mathrm{on}\ \partial U_r.
		\end{array}
		\right.
\end{align}
Note that by assumption, the constant function $u\equiv 0$ is a solution of \eqref{eq:pdeshrinked} for all $0<r\leq 1$.
We call $r^\ast\in[0,1]$ a bifurcation radius for \eqref{eq:pdeshrinking} if there are sequences $r_n\in(0,1]$ and $u_n\in H^1_0(U_r,\mathbb{R}^p)$, $u_n\neq 0$, $n\in\mathbb{N}$, such that $r_n\rightarrow r^\ast$ and $\|u_n\|_{H^1_0(U_r,\mathbb{R}^p)}\rightarrow 0$ as $n\rightarrow\infty$.\\
As in \cite{AleSmaleIndef} it follows that for any fixed $r\in(0,1]$ the solutions of \eqref{eq:pdeshrinked} are the critical points of the functional

\begin{align*}
		\tilde{f}(u) :=\frac{1}{2}\int_{U_r} \sum_{i=1}^p (-a_i\vert\nabla u_i(x)\vert^2)\,\text{d}x-\int_{U_r} F(x, u(x))\,\text{d}x,
	\end{align*}
which after a change of coordinates $x\mapsto r\cdot x$ transforms to

\begin{align}
		\label{eq:functionalshrinked}
		f(r,u) :=\frac{1}{2}\int_{U} \sum_{i=1}^p (-a_i\vert\nabla u_i(x)\vert^2)\,\text{d}x-r^2\int_{U} F(r\cdot x, u(x))\,\text{d}x,
	\end{align}
and which now can be considered for $r\in[0,1]$. Thus we are now in the setting of Section \ref{sec:VarSet} and consequently the second derivative of $f_r:H^1_0(U,\mathbb{R}^p)\rightarrow\mathbb{R}$ is given by

\begin{align}\label{shrinkingL}
\begin{split}
D^2_0f_r(u,v)&=\int_U \sum_{i=1}^p (-a_i\langle \nabla u_i(x),\nabla v_i(x)\rangle) \,\text{d}x -r^2\int_U \left\langle B (r\cdot x)u(x),v(x)\right\rangle\,\text{d}x\\
&=\langle(T+K_r)u,v\rangle_{H^1_0(U,\mathbb{R}^p)}.
\end{split}
\end{align}
Note that the Hessian $L_r=T+K_r$ is invertible for $r=0$ as $K_0=0$. By the implicit function theorem, it in particular follows that there are no bifurcation radii close to $0$.\\
Let us now first consider the case that $B$ in \eqref{Bshrunk} does not explicitly depend on $x$ as in Section \ref{sec:4}. It is an immediate consequence of Theorem \ref{thm:bifurctaion1} that there is a bifurcation radius $r^\ast\in(0,1]$ for \eqref{eq:pdeshrinked} if \eqref{eq:pdeshrinkinglin} has no non-trivial solution and

\begin{align}\label{indexshrink}
\sum_{k=1}^\infty (\sgn(L^k)-\sgn(-A))\neq 0,
\end{align}
where 

\begin{align}\label{Lkshrink}
		L^k = 
		\begin{pmatrix}
			I_{p_1\times p_1} &  & 0 \\
			0 &  & -I_{p_2\times p_2}
		\end{pmatrix}
		-\frac{1}{\alpha_k}B=-A-\frac{1}{\alpha_k}B,
	\end{align}
	and $(\alpha_k)$ is the sequence of Dirichlet eigenvalues of the domain $U$. Thus the existence of bifurcation radii can be obtained by relating the eigenvalues of $B$ to the Dirichlet eigenvalues of the domain $U$. The following theorem shows that it is not even necessary to compute \eqref{indexshrink} under reasonable assumptions. Let us emphasize that in the second part of the theorem, we can even lift the assumption that the linearised equation \eqref{eq:pdeshrinkinglin} has no non-trivial solution, and thus the existence of bifurcation radii can be obtained solely by estimating the smallest eigenvalue of the matrix $B$. As before we denote by $\mu_1(M)\leq\ldots\leq\mu_p(M)$ the eigenvalues of a symmetric matrix $M\in\Mat(p,\mathbb{R})$.  

\begin{theorem}\label{thm:shrink}
If the matrix family $B$ in \eqref{Bshrunk} does not depend on $x\in U$ and $1\leq p_1,p_2\leq p-1$, then there is a bifurcation radius $r^\ast\in(0,1)$ for \eqref{eq:pdeshrinked} if

\begin{enumerate}
\item[(i)] either \eqref{eq:pdeshrinkinglin} has no non-trivial solution and

\[-\alpha_1<\mu_1(B)\quad\text{as well as}\quad\alpha_1<\mu_p(B),\]
\item[(ii)] or $\mu_1(B)>\alpha_1$, i.e., the smallest eigenvalue of $B$ is larger than the smallest Dirichlet eigenvalue of the domain $U$.

\end{enumerate} 
\end{theorem}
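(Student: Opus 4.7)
The plan is to apply Theorem~\ref{thm-bif} through the spectral flow of the path $L=\{L_r\}_{r\in[0,1]}$ of Hessians \eqref{shrinkingL}. Since $K_0=0$, the left endpoint $L_0=T$ is automatically invertible, so the whole task reduces to producing a non-zero spectral flow. Because the matrix family $r\mapsto r^2B$ is constant in $x\in U$, all the machinery of Sections~\ref{sec:4} and~\ref{Sec:Comp} is available.

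For part~(ii) I will reduce to an interval on which both endpoints are invertible and then invoke Corollary~\ref{cor:eigcomp}(ii). The exceptional set
\[S=\{r\in(0,1]:L_r\text{ is not invertible}\}=\{r:\det(r^2B+\alpha_kA)=0\text{ for some }k\in\mathbb{N}\}\]
is finite: each $\det(r^2B+\alpha_kA)$ is a polynomial in $r^2$ with only finitely many roots in $[0,1]$, and the uniform estimate behind \eqref{eq:abschaetzung2} rules out contributions from $k$ sufficiently large. Pick $r^\dagger\in(0,1]\setminus S$ close enough to $1$ so that $(r^\dagger)^2\mu_1(B)>\alpha_1$, which is possible because $\mu_1(B)>\alpha_1$ by assumption. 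The reparametrized family $\lambda\mapsto L_{r^\dagger\lambda}$ on $[0,1]$ has endpoint matrix $(r^\dagger)^2B$ and invertible Hessians at $\lambda=0,1$, and Corollary~\ref{cor:eigcomp}(ii) (with $\beta_1=(r^\dagger)^2\mu_1(B)>\alpha_1>0=\gamma_0$) supplies a bifurcation radius in $(0,r^\dagger)\subset(0,1)$.

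For part~(i) both endpoints are invertible by hypothesis, so Theorem~\ref{thm:bifurctaion1} delivers
\[\sfl(L)\;=\;i(B)\;=\;\tfrac12\sum_{k=1}^{\infty}\bigl(\sgn(L^k)-\sgn(-A)\bigr),\qquad L^k=-A-\tfrac{1}{\alpha_k}B,\]
and I must verify $i(B)\neq0$. The Weyl inequalities
\[\mu_i(-A)-\tfrac{1}{\alpha_k}\mu_p(B)\;\le\;\mu_i(L^k)\;\le\;\mu_i(-A)-\tfrac{1}{\alpha_k}\mu_1(B)\]
combined with $\mu_1(B)>-\alpha_1\ge-\alpha_k$ guarantee that each of the $p_2$ eigenvalues of $-A$ equal to $-1$ remains strictly negative in $L^k$ for every $k$, so $\mu_{\text{Morse}}(L^k)\ge p_2$ and every summand in $i(B)$ is non-positive. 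The hypothesis $\alpha_1<\mu_p(B)$ forces $\mu_1(-B/\alpha_1)<-1$, and a $(p_2+1)$-dimensional min-max argument --- exhibiting a $(p_2+1)$-dimensional subspace on which $L^1$ is negative definite by adjoining the top $B$-eigenvector to the span of the last $p_2$ standard basis vectors and exploiting Cauchy interlacing --- forces $\mu_{p_2+1}(L^1)<0$ and hence $\mu_{\text{Morse}}(L^1)>p_2$. The $k=1$ summand is therefore strictly negative, $i(B)\le-1$, and Theorem~\ref{thm-bif} produces a bifurcation radius in $(0,1)$.

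The main obstacle is the $(p_2+1)$-dimensional subspace construction in part~(i): one must show that the interaction of the top $B$-eigenvector with the block structure of $-A$ really does keep the restriction of $L^1$ negative definite after enlargement, and this is precisely where the two hypotheses $\mu_1(B)>-\alpha_1$ and $\mu_p(B)>\alpha_1$ play off each other. In contrast, part~(ii) is routine once the discreteness of $S$ is secured, because the positive-definiteness of $\alpha_1A+(r^\dagger)^2B$ is transparent from $v^TBv>\alpha_1|v|^2$ after splitting $v=v^{(1)}+v^{(2)}$ along the $A$-blocks.
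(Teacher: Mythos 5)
Your part (ii) is correct but takes a genuinely different route from the paper's. You argue that the set $S$ of radii at which some restriction $L_r^k=-A-(r^2/\alpha_k)B$ is singular is finite: for each $k$ the determinant is a polynomial in $r^2$ with finitely many roots in $[0,1]$, and only finitely many $k$ can contribute since $\|(r^2/\alpha_k)B\|<1$ for $\alpha_k>\|B\|$. Choosing $r^\dagger\notin S$ close to $1$ and reparametrizing then reduces to the invertible case where Corollary~\ref{cor:eigcomp}(ii) applies. The paper instead computes the crossing form $\Gamma(L,1)[u]=-2\int_U\langle Bu,u\rangle\,dx$, observes that $\mu_1(B)>\alpha_1>0$ makes $B$ positive definite so that $\Gamma(L,1)$ is negative definite, and concludes $r=1$ is a regular hence isolated crossing. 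Both deliver the required invertibility of the endpoint; your finiteness argument is clean precisely because $B$ is constant, while the crossing-form argument is the one the paper reuses in Theorem~\ref{thm-bifshrink}, where $B$ depends on $x$ and the exceptional set need not be finite.

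Part (i), however, contains a genuine gap. The $(p_2+1)$-dimensional subspace you propose need not have dimension $p_2+1$: if the top $B$-eigenvector already lies in $\spa\{e_{p_1+1},\dots,e_p\}$, nothing is adjoined. Take $p=2$, $p_1=p_2=1$, $A=\diag(-1,1)$, a domain with $\alpha_1=1$, and $B=\diag(0,2)$. The hypotheses $\mu_1(B)=0>-\alpha_1$ and $\mu_p(B)=2>\alpha_1$ hold and the linearised problem has only the trivial solution, yet the top $B$-eigenvector is $e_2\in\spa\{e_2\}$, your proposed span is one-dimensional, and $L^1=-A-B=\diag(1,-3)$ has Morse index $1=p_2$, not $p_2+1$; every summand in $i(B)$ is $0$, $\sfl(L)=0$, and the $k=1$ summand is not strictly negative as you assert. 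Even when the span does have the right dimension, negativity of $L^1$ on the two pieces separately does not give negative definiteness on the span, because the cross terms coming from $-A$ are uncontrolled. Note that the paper's own proof reaches the same conclusion $\mu_{\text{Morse}}(L^1)\geq p_2+1$ via Weyl inequalities, but with an apparent eigenvalue-indexing slip (it takes $\mu_l(-A)=1$ for $l\leq p_1$, whereas in increasing order $\mu_l(-A)=-1$ for $l\leq p_2$); the Weyl bound one actually obtains is $\mu_{p_2+1}(L^1)\leq 1-\mu_{p_1}(B)/\alpha_1$, which requires the intermediate eigenvalue condition $\mu_{p_1}(B)>\alpha_1$, not merely $\mu_p(B)>\alpha_1$. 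So the gap you need to close is not an artefact of your subspace approach; it reflects that the $k=1$ summand simply need not be strictly negative under the stated hypotheses of part (i).
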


\begin{proof}
We begin by showing (i), which is a consequence of Theorem \ref{thm:bifurctaion1} and Corollary \ref{cor-bif}. Thus we aim to show that the index in \eqref{indexshrink} does not vanish under the given assumptions.\\
As $\sgn(M)=p-2\mu_{Morse}(M)$ for any symmetric and invertible $M\in\Mat(p,\mathbb{R})$, we obtain

\[\sgn(L^k)=p-2\mu_{Morse}(L^k),\qquad \sgn(-A)=p_1+p_2-2p_1=p_1-p_2,\]
and thus in \eqref{indexshrink}

\begin{align}\label{indexreformulated}
\sgn(L^k)-\sgn(-A)=p_1+p_2-2\mu_{Morse}(L^k)-p_1+p_2=2p_2-2\mu_{Morse}(L^k).
\end{align}
By Weyl's inequality on the perturbation of eigenvalues, we obtain in \eqref{Lkshrink} for $1\leq l\leq p$ and $l\leq i\leq p$

\[\mu_l(L^k)\leq\mu_i(-A)+\mu_{p+l-i}(-\frac{1}{\alpha_k}B),\]
and thus for $1\leq l\leq p_1$

\begin{align*}
\mu_l(L^k)\leq 1+\mu_{p+l-i}(-\frac{1}{\alpha_k}B)=1-\frac{1}{\alpha_k}\mu_{i-l+1}(B),
\end{align*}
which is negative if and only if $\alpha_k<\mu_{i-l+1}(B)$ for some $l\leq i\leq p$. In particular, we note that $\mu_1(L^k)$ is negative if $\alpha_k<\mu_p(B)$. Now we consider the case that $p_1+1\leq l\leq p$. By Weyl's inequality

\begin{align*}
\mu_l(L^k)\leq -1+\mu_{p+l-i}(-\frac{1}{\alpha_k}B)=-1-\frac{1}{\alpha_k}\mu_{i-l+1}(B),
\end{align*}
which is negative if and only if $-\alpha_k<\mu_{i-l+1}(B)$ for some $l\leq i\leq p$.\\ 
In summary, as the sequence $(\alpha_k)_{k\in\mathbb{N}}$ is ascending, if $\mu_1(B)>-\alpha_1$, then $\mu_l(L^k)<0$ for all $p_1+1\leq l\leq p$ and thus $\mu_{Morse}(L^k)\geq p-(p_1+1)=p_2$ for all $k\in\mathbb{N}$. If moreover, $\alpha_1<\mu_p(B)$, then $\mu_{Morse}(L^1)\geq p_2+1$ and we obtain in \eqref{indexshrink} and by \eqref{indexreformulated} that

\begin{align*}
\sum_{k=1}^\infty (\sgn(L^k)-\sgn(-A))&=2\sum_{k=1}^\infty (p_2-\mu_{Morse}(L^k))\leq 2(p_2-p_2-1)+2\sum_{k=2}^\infty (p_2-\mu_{Morse}(L^k))\\
&=-2+2\sum_{k=2}^\infty (p_2-\mu_{Morse}(L^k))
\end{align*}
where the terms in the latter sum are all non-positive. Thus $(i)$ is proved.\\
Let us now consider $(ii)$ and assume in a first step that \eqref{eq:pdeshrinkinglin} has no non-trivial solution, which again implies that the matrices $L^k$, $k\in\mathbb{N}$, in \eqref{Lkshrink} are invertible. As in \eqref{indexreformulated}, we note that we need to show that

\begin{align}\label{indexreformulated2}
\sum^\infty_{k=1}(p_2-\mu_{Morse}(L^k))\neq 0.
\end{align}
By using Weyl's inequality once again, we obtain that

\[\mu_l(L^k)\leq\mu_l(-A)+\mu_p(-\frac{1}{\alpha_k}B)=\mu_l(-A)-\frac{1}{\alpha_k}\mu_1(B).\]
Thus, if $1\leq l\leq p_1$, we see that $\mu_l(L^k)<0$ if $1-\frac{1}{\alpha_k}\mu_1(B)<0$ which is the case if $\alpha_k<\mu_1(B)$. Moreover, if $p_1+1\leq l\leq p$, then $\mu_l(L^k)<0$ if $-1-\frac{1}{\alpha_k}\mu_1(B)<0$ which is equivalent to $-\alpha_k<\mu_1(B)$. Now let us assume as in the theorem that $\mu_1(B)>\alpha_1$. Then $\mu_1(B)>-\alpha_k$ and thus $\mu_l(B)>0$ for $p_1+1\leq l\leq p$, which shows that $\mu_{Morse}(L^k)\geq p_2$ for all $k\in\mathbb{N}$. Moreover, the first case from above tells us that $\mu_1(L^1)<0$ and consequently $\mu_{Morse}(L^1)\geq p_2+1$. As in $(i)$, we obtain in \eqref{indexreformulated2} that

\[\sum^\infty_{k=1}(p_2-\mu_{Morse}(L^k))=p_2-(p_2+1)+\sum^\infty_{k=2}(p_2-\mu_{Morse}(L^k))=-1+\sum^\infty_{k=2}(p_2-\mu_{Morse}(L^k)),\]
where the sum on the right hand side is non-positive. Thus there is a bifurcation radius by Theorem \ref{thm:bifurctaion1}.\\
Now our aim is to lift the assumption that \eqref{eq:pdeshrinkinglin} has no non-trivial solution, for which we need to make a brief digression that stems from a method for computing the spectral flow that was introduced by Robbin and Salamon in \cite{Robbin-Salamon} and generalised in \cite{Homoclinics}.\\
Let $H$ be a Hilbert space and $\mathcal{L}=\{\mathcal{L}_\lambda\}_{\lambda\in I}$ a path in the normed space of bounded operators $\mathcal{L}(H)$, which we assume to be continuously differentiable with respect to the operator norm. We also assume that each $\mathcal{L}_\lambda$ is selfadjoint and Fredholm. A parameter value $\lambda^\ast$ is called a \textit{crossing} of $\mathcal{L}$ if $\ker(\mathcal{L}_{\lambda^\ast})\neq \{0\}$, and the \textit{crossing form} of a crossing is the quadratic form defined by

\[\Gamma(\mathcal{L},\lambda^\ast)[u]=\langle \dot{\mathcal{L}}_{\lambda^\ast}u,u\rangle,\qquad u\in \ker(\mathcal{L}_{\lambda^\ast}),\]   
where $\dot{\mathcal{L}}_{\lambda^\ast}$ denotes the derivative of the path $\mathcal{L}$ at $\lambda=\lambda^\ast$. A crossing is called \textit{regular} if $\Gamma(\mathcal{L},\lambda^\ast)$ is non-degenerate. It was shown in \cite{Robbin-Salamon} that crossings are isolated if they are regular, i.e., if $\lambda^\ast$ is regular, then $\mathcal{L}_\lambda$ is invertible for any $\lambda\neq\lambda^\ast$ that is sufficiently close to $\lambda$.\\
In our case, it follows from \eqref{shrinkingL} and the assumption that $B$ does not depend on $x$, that the crossing form is

\[\Gamma(L,1)[u]=-2\int_U{\langle Bu(x),u(x)\rangle\, dx},\quad u\in\ker(L_{1}),\] 
where $\ker(L_1)$ is just the space of solutions of the linearised equation \eqref{eq:pdeshrinkinglin}. Now, since $\mu_1(B)>0$, $B$ is positive definite and consequently $\Gamma(L,1)$ is negative definite and in particular non-degenerate. Thus $L_r$ is invertible for any $r\neq 1$ that is sufficiently close to $1$ and we can apply the first part of the proof to the equations on a slightly shrunk domain, where $\mu_1(B)$ is still larger than the smallest Dirichlet eigenvalue. This yields the claimed bifurcation even though \eqref{eq:pdeshrinkinglin} might have non-trivial solutions and finishes the proof of $(ii)$.  
\end{proof}
\noindent
We now drop the assumption that $B$ in \eqref{Bshrunk} does not depend on $x\in U$ and obtain bifurcation results by the comparison theory that we developed in Section \ref{Sec:Comp}. To state the results as clear as possible, we consider a system of two equations as in \cite{Edinburgh}. Obviously, the reader will not have any difficulties in finding the corresponding conditions in the case of more coupled equations. To fix notations, let us state \eqref{eq:pdeshrinked} in this setting once again

\begin{align}\label{eq:shrinking2}
		\left\{
		\begin{array}{rl}
			-\Delta u(x) &= \frac{\partial F}{\partial u}(x,u(x),v(x))\ \hspace*{0.25cm} \mathrm{in}\ U_r\\
			\Delta v(x) &= \frac{\partial F}{\partial v}(x,u(x),v(x))\ \hspace*{0.25cm} \mathrm{in}\ U_r\\		
			u(x) &=v(x)= 0\ \hspace*{1.5cm}  \mathrm{on}\ \partial U_r,
		\end{array}
		\right.
\end{align}
and let us emphasize that $u,v$ are now scalar functions. We set

\begin{align}\label{B}
B(x):=
		\begin{pmatrix}
			b_{11}(x)& b_{12}(x)  \\
			b_{12}(x)	& b_{22}(x) 
		\end{pmatrix},
		\end{align}
where $B(x)=D^2_0F$ is the Hessian of $F:U\times\mathbb{R}^2\rightarrow\mathbb{R}$ at $0\in\mathbb{R}^2$. 
The following theorem underpins the strength of Theorem \ref{thm-BifComp} about comparison. Note that we no longer necessarily require that the linearised equation \eqref{eq:pdeshrinkinglin} has no non-trivial solution, which as in Theorem \ref{thm:bifurctaion1} allows to find bifurcation radii just by inspecting the matrix \eqref{B}.

\begin{theorem}\label{thm-bifshrink}
Let $p=2$ and $A=\diag(-1,1)$. If either

\begin{itemize}
 \item[(i)] $\underset{x\in \overline{U}}{min}\ b_{22}(x)> \underset{x\in \overline{U}}{max}\ |b_{12}(x)|$ and $\underset{x\in \overline{U}}{min}\ b_{11}(x)>\underset{x\in \overline{U}}{max}\ |b_{12}(x)|+\alpha_1$, as well as
\begin{align}\label{posdef}
\frac{\partial}{\partial r}\mid_{r=1}\langle B(rx)u,u\rangle\geq 0,\quad x\in\mathbb{R}^n,\quad u\in\mathbb{R}^2,
\end{align} 
\end{itemize}
or
\begin{itemize} 
 \item[(ii)] $\underset{x\in \overline{U}}{max}\ b_{11}(x)< -\underset{x\in \overline{U}}{max}\ |b_{12}(x)|$ and $\underset{x\in \overline{U}}{max}\ b_{22}(x)<-\underset{x\in \overline{U}}{max}\ |b_{12}(x)|-\alpha_1$, as well as
\begin{align}\label{negdef}
\frac{\partial}{\partial r}\mid_{r=1}\langle B(rx)u,u\rangle\leq 0,\quad x\in\mathbb{R}^n,\quad u\in\mathbb{R}^2,
\end{align} 
\end{itemize}
then there is a bifurcation radius for \eqref{eq:shrinking2}.
\end{theorem}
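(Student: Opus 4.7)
The plan is to closely mirror the two-step strategy in the proof of Theorem \ref{thm:shrink}(ii): first establish the conclusion under the auxiliary hypothesis that the linearised equation \eqref{eq:pdeshrinkinglin} admits only the trivial solution at $r=1$, and then remove that hypothesis via a Robbin--Salamon crossing-form computation at $r=1$.

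For the main step I regard the path of Hessians $L=\{L_r\}_{r\in[0,1]}$ from \eqref{shrinkingL} as an instance of the setting of Section \ref{Sec:Comp} with $B_\lambda(x)$ replaced by $\tilde B_r(x):=r^2B(rx)$, and apply Theorem \ref{thm-BifComp}. In case (i) I choose
\[
D_0:=0,\qquad D_1:=\diag\!\left(\min_{\overline{U}}b_{11}-\max_{\overline{U}}|b_{12}|,\ \min_{\overline{U}}b_{22}-\max_{\overline{U}}|b_{12}|\right).
\]
Then $D_0-\tilde B_0=0$ and the pointwise positive semi-definiteness of $\tilde B_1-D_1=B(x)-D_1$ follows from the elementary $2\times 2$ bound $\mu_1\bigl(\begin{smallmatrix}a&b\\b&c\end{smallmatrix}\bigr)\ge\min(a,c)-|b|$ applied to entries that are bounded below by $\max|b_{12}|$. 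The monotonicity $D_{j,0}\le D_{j,1}$ is immediate from the sign hypotheses, and the strict inequality $\min b_{11}>\max|b_{12}|+\alpha_1$ supplies the gap $\mu_1(D_{1,0})=0<\alpha_1<\mu_1(D_{1,1})$ required by Theorem \ref{thm-BifComp}(ii). Case (ii) is symmetric: apply Theorem \ref{thm-BifComp}(i) with
\[
C_0:=0,\qquad C_1:=\diag\!\left(\max_{\overline{U}}b_{11}+\max_{\overline{U}}|b_{12}|,\ \max_{\overline{U}}b_{22}+\max_{\overline{U}}|b_{12}|\right),
\]
and the gap $\mu_1(C_{2,1})<-\alpha_1<0=\mu_1(C_{2,0})$ from $\max b_{22}<-\max|b_{12}|-\alpha_1$.

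To remove the invertibility assumption at $r=1$ I compute, as in the proof of Theorem \ref{thm:shrink}(ii), the crossing form of $L$ at $r^\ast=1$ directly from \eqref{shrinkingL}:
\[
\Gamma(L,1)[u]=-2\int_U\langle B(x)u,u\rangle\,\mathrm{d}x-\int_U\left.\frac{\partial}{\partial r}\right|_{r=1}\!\langle B(rx)u,u\rangle\,\mathrm{d}x,\quad u\in\ker(L_1).
\]
Under (i) the same $2\times 2$ bound shows that $B(x)$ is positive definite at every $x\in\overline{U}$, so the first integral is strictly negative for any nontrivial $u\in H^1_0(U,\mathbb{R}^2)$, while the second is non-positive by \eqref{posdef}; hence $\Gamma(L,1)$ is negative definite, and the crossing at $r=1$ is regular and therefore isolated. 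I then pick $r_0<1$ close to $1$ such that $L_{r_0}$ is invertible and, by continuity of $B$ on $\overline{U}$, the inequalities of case (i) still hold for $\tilde B_{r_0}$ (in particular $\alpha_1/r_0^2<\min_x b_{11}(r_0 x)-\max_x|b_{12}(r_0 x)|$), and I re-run the main step on the restricted path $r\in[0,r_0]$ to obtain the claimed bifurcation radius in $(0,r_0)\subset(0,1)$. Case (ii) is identical with signs reversed: $B(x)$ is negative definite and \eqref{negdef} makes the second integral non-negative, so $\Gamma(L,1)$ becomes positive definite.

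I expect the main obstacle to be purely technical: propagating the factor $r^2$ in $\tilde B_r(x)=r^2 B(rx)$ through the application of Theorem \ref{thm-BifComp} at $r=r_0$ rather than at $r=1$, and checking that each strict inequality in the hypotheses survives the perturbation $r_0\uparrow 1$ by continuity. The conceptual content---comparison with a block-diagonal constant family and regularity of the $r=1$ crossing---is already available from the earlier sections.
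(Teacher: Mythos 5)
Your proposal is correct and follows the paper's own two-step strategy almost exactly: reduce to the invertible-at-$r=1$ case via Theorem \ref{thm-BifComp}, then remove that hypothesis by checking that the crossing form at $r=1$ is definite under \eqref{posdef}/\eqref{negdef} and re-running the argument on a slightly shrunk domain. The only difference is cosmetic: where you apply Theorem \ref{thm-BifComp} directly with an explicit choice of $C_\lambda$, $D_\lambda$ (and verify the Gershgorin-type bound yourself), the paper simply cites the specialisation (i)--(iv) of Theorem \ref{thm-BifComp} for the case $p=2$ recorded at the end of Section \ref{Sec:Comp}; your added continuity remark on $\alpha_1/r_0^2$ vs.\ $\min b_{11}-\max|b_{12}|$ is a useful detail that the paper leaves implicit.
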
 
\noindent

\begin{proof}
Firstly, if $(i)$ or $(ii)$ holds and \eqref{eq:pdeshrinkinglin} has no non-trivial solution, then the assertion is a simple consequence of our investigations about equation \eqref{eq:pdeGeneral2} at the end of Section \ref{Sec:Comp} and it is not even necessary to assume \eqref{posdef} or \eqref{negdef}. We will need this observation below.\\
If \eqref{eq:pdeshrinkinglin} has a non-trivial solution, and consequently $L_1$ has a non-trivial kernel, we consider crossing forms as in the proof of Theorem \ref{thm:shrink}. Here the crossing form at $r=1$ is given by 

\[\Gamma(L,1)[u]=-\frac{\partial}{\partial r}\mid_{r=1}r^2\int_U{\langle B(r\,x)u,u\rangle\, dx}=-2\int_U{\langle B(x)u,u\rangle\, dx}-\int_U{\frac{\partial}{\partial r}\mid_{r=1}\langle B(r\,x)u,u\rangle\, dx.}\]
Now note that under the assumptions of $(i)$, the matrix $B$ is positive definite, whereas in $(ii)$ it is negative definite. With the required estimates \eqref{posdef} and \eqref{negdef}, we see that $1$ is a regular crossing and thus isolated as in the proof of Theorem \ref{thm:shrink}. Now we just apply the first part of the proof on a slightly shrunk domain $U_{1-\varepsilon}$, where the first two estimates in $(i)$ or $(ii)$ still hold, but the corresponding operators $L_{1-\varepsilon}$ are invertible, i.e. the linearised equations \eqref{eq:pdeshrinkinglin} have no non-trivial solution on $U_{1-\varepsilon}$.
\end{proof}
\noindent
Let us once again emphasise that bifurcation raddii of \eqref{eq:pdeshrinking} were studied in \cite{AleIchDomain}, but no computable way to find them was provided. Note that Theorem \ref{thm-bifshrink} allows to find them only by knowledge of the coefficients in \eqref{eq:shrinking2}.\\
Let us finally provide a numerical example. We consider equation \eqref{eq:shrinking2} on a disc $U=D^2_r$ of radius $r>0$ in $\mathbb{R}^2$. It is well known that the Dirichlet eigenvalues of $D^2_r$ are given by

\begin{align}\label{Dirichelteigenvalues}
\lambda_{nm}=\left(\frac{\beta_{mn}}{r}\right)^2,\quad n\in\mathbb{N}\cup\{0\}, m\in\mathbb{N},
\end{align}
where $\beta_{nm}$ is the $m$-th positive zero of the $n$-th Bessel function. The smallest of these numbers is $\beta_{0,1}$ and its numerical value is approximately $2.40483$. Thus $5<\beta^2_{0,1}<6$ and we obtain from Theorem \ref{thm-bifshrink} that there is a bifurcation radius for any equation \eqref{eq:shrinking2} on a disc $D^2_r$ in $\mathbb{R}^2$ if either      

\[\underset{x\in \overline{D^2_r}}{max}\ b_{11}(x)< -\underset{x\in \overline{D^2_r}}{max}\ |b_{12}(x)|,\quad \underset{x\in \overline{D^2_r}}{max}\ b_{22}(x)<-\underset{x\in \overline{D^2_r}}{max}\ |b_{12}(x)|-\frac{6}{r^2},\quad\text{and}\quad\eqref{negdef}\]
hold, or 

\[\underset{x\in \overline{D^2_r}}{min}\ b_{22}(x)> \underset{x\in \overline{D^2_r}}{max}\ |b_{12}(x)|,\quad \underset{x\in \overline{D^2_r}}{min}\ b_{11}(x)>\underset{x\in \overline{D^2_r}}{max}\ |b_{12}(x)|+\frac{6}{r^2}\quad\text{and}\quad \eqref{posdef}.\]


	\subsubsection*{Acknowledgments}
	N. Waterstraat was supported by the Deutsche Forschungsgemeinschaft (DFG, German Research Foundation) - 459826435.

	\thebibliography{99}
	
\bibitem{AlexanderMike} J.C. Alexander, P.M. Fitzpatrick, \textbf{Spectral flow is a complete invariant for detecting bifurcation of critical points}, Trans. Amer. Math. Soc. \textbf{368},  2016, no. 6, 4439--4459

	
	\bibitem{AtiyahSinger} M.F. Atiyah, I.M. Singer, \textbf{Index Theory for skew--adjoint Fredholm operators}, Inst. Hautes Etudes Sci. Publ. Math. \textbf{37}, 1969, 5--26 
	
	\bibitem{AtiyahPatodi} M.F. Atiyah, V.K. Patodi, I.M. Singer, \textbf{Spectral Asymmetry and Riemannian Geometry III}, Proc. Cambridge Philos. Soc. \textbf{79}, 1976, 71--99
	
	
	
	
	\bibitem{JacoboUniqueness} E. Ciriza, P.M. Fitzpatrick, J. Pejsachowicz, \textbf{Uniqueness of Spectral Flow}, Math. Comp. Mod. \textbf{32}, 2000, 1495--1501
	
\bibitem{Cox} G. Cox, C. Jones, Y. Latushkin, A. Sukhtayev ,\textbf{The Morse and Maslov indices for multidimensional Schr\"odinger operators with matrix-valued potentials}, Trans. Amer. Math. Soc. \textbf{368}, 2016, 8145--8207

\bibitem{DengJones} J. Deng, C. Jones, \textbf{Multi-dimensional Morse index theorems and a symplectic view of elliptic boundary value problems}, Trans. Amer. Math. Soc. \textbf{363}, 2011, 1487--1508

	\bibitem{NoraHermannNils} N. Doll, H. Schulz-Baldes, N. Waterstraat, \textbf{Spectral Flow: A Functional Analytic and Index-Theoretic Approach}, Berlin, Boston: De Gruyter, 2023
	
	
	\bibitem{Specflow} P.M. Fitzpatrick, J. Pejsachowicz, L. Recht, \textbf{Spectral Flow and Bifurcation of Critical Points of Strongly-Indefinite Functionals-Part I: General Theory}, Journal of Functional Analysis \textbf{162}, 1999, 52--95
	
	\bibitem{SFLPejsachowiczII} P.M. Fitzpatrick, J. Pejsachowicz, L. Recht, \textbf{Spectral Flow and Bifurcation of Critical Points of Strongly-Indefinite Functionals Part II: Bifurcation of Periodic Orbits of Hamiltonian Systems}, J. Differential Equations \textbf{163}, 2000, 18--40
	
	
	
	\bibitem{GawryRy} J. Gawrycka, S. Rybicki, \textbf{Solutions of systems of elliptic differential equations on circular domains}, Nonlinear Anal.  \textbf{59}, 2004, 1347--1367
	
	\bibitem{GoleRy} A. Golebiewska, S. Rybicki, \textbf{Global bifurcations of critical orbits of G-invariant strongly indefinite functionals}, Nonlinear Anal. \textbf{74},  2011, 1823--1834
	
	
\bibitem{Lesch} M. Lesch, \textbf{The uniqueness of the spectral flow on spaces of unbounded self-adjoint Fredholm operators}, Spectral geometry of manifolds with boundary and decomposition of manifolds, 193--224, Contemp. Math., 366, Amer. Math. Soc., Providence, RI,  2005
	
	\bibitem{Mawhin} J. Mawhin, M. Willem, \textbf{Critical point theory and Hamiltonian systems}, Applied Mathematical Sciences \textbf{74}, Springer-Verlag, New York,  1989
	
	\bibitem{BifJac} J. Pejsachowicz, N. Waterstraat,
	\textbf{Bifurcation of critical points for continuous families of $C^2$ functionals of Fredholm type}, J. Fixed Point Theory Appl. \textbf{13}, 2013, 537--560

\bibitem{Phillips} J. Phillips, \textbf{Self-adjoint Fredholm Operators and Spectral Flow}, Canad. Math. Bull. \textbf{39}, 1996, 460--467

\bibitem{AleIchDomain} A. Portaluri, N. Waterstraat, \textbf{On bifurcation for semilinear elliptic Dirichlet problems and the Morse-Smale index theorem}, J. Math. Anal. Appl. \textbf{408}, 2013, 572--575, arXiv:1301.1458 [math.AP]

\bibitem{AleBall} A. Portaluri, N. Waterstraat, \textbf{On bifurcation for semilinear elliptic Dirichlet problems on geodesic balls}, J. Math. Anal. Appl. \textbf{415}, 2014, 240--246, arXiv:1305.3078 [math.AP]

\bibitem{AleSmaleIndef} A. Portaluri, N. Waterstraat, \textbf{A Morse-Smale index theorem for indefinite elliptic systems and bifurcation}, J. Differential Equations \textbf{258}, 2015, 1715--1748, arXiv:1408.1419 [math.AP]

	\bibitem{Rabinowitz} P.\,H. Rabinowitz, \textbf{Minimax Methods in Critical Point Theory with Applications to Differential Equations}, Conf. Board Math. Sci. \textbf{65}, 1986
	
	
	
	
	\bibitem{Robbin-Salamon} J. Robbin, D. Salamon, \textbf{The spectral flow and the {M}aslov index}, Bull. London Math. Soc. {\bf 27}, 1995, 1--33
	
	
\bibitem{Smale} S. Smale, \textbf{On the Morse index theorem}, J. Math. Mech. \textbf{14}, 1965, 1049--1055

\bibitem{SmollerWasserman} J. Smoller, A.G. Wasserman, \textbf{Bifurcation and symmetry-breaking}, Invent. Math. \textbf{100},  1990, 63--95
	
	\bibitem{CompSfl}  M. Starostka, N. Waterstraat, \textbf{On a Comparison Principle and the Uniqueness of Spectral Flow}, Math. Nachr. \textbf{295}, 785--805 
	
	\bibitem{Szulkin} A. Szulkin, \textbf{Bifurcation for strongly indefinite functionals and a Liapunov type theorem for Hamiltonian systems}, J. Differential Integral Equations \textbf{7}, 1994, 217--234
	
	
	\bibitem{ProcHan} N. Waterstraat, \textbf{On bifurcation for semilinear elliptic Dirichlet problems on shrinking domains}, Springer Proc. Math. Stat. \textbf{119}, 2015, 273--291, arXiv:1403.4151 [math.AP] 
	
	
	\bibitem{Homoclinics} N. Waterstraat, \textbf{Spectral flow, crossing forms and homoclinics of Hamiltonian systems}, Proc. Lond. Math. Soc. (3) \textbf{111}, 2015, 275--304
	
	\bibitem{Fredholm} N. Waterstraat, \textbf{Fredholm Operators and Spectral Flow}, Rend. Semin. Mat. Univ. Politec. Torino \textbf{75}, 2017, 7--51
	
	\bibitem{Edinburgh} N. Waterstraat, \textbf{Spectral flow and bifurcation for a class of strongly indefinite elliptic systems}, Proc. Roy. Soc. Edinburgh Sect. A \textbf{148},  2018, 1097--1113

\vspace*{1.3cm}

\begin{minipage}{1.2\textwidth}
\begin{minipage}{0.4\textwidth}

Joanna Janczewska\\
Institute of Applied Mathematics\\
Faculty of Applied Physics and Mathematics\\
Gda\'{n}sk University of Technology\\
Narutowicza 11/12, 80-233 Gda\'{n}sk, Poland\\
joanna.janczewska@pg.edu.pl\\\\\\
Melanie M\"ockel\\
	Martin-Luther-Universit\"at Halle-Wittenberg\\
	Naturwissenschaftliche Fakult\"at II\\
	Institut f\"ur Mathematik\\
	06099 Halle (Saale), Germany\\
	melanie.moeckel@mathematik.uni-halle.de\\\\

\end{minipage}
\hfill
\begin{minipage}{0.6\textwidth}

Nils Waterstraat\\
Martin-Luther-Universit\"at Halle-Wittenberg\\
Naturwissenschaftliche Fakult\"at II\\
Institut f\"ur Mathematik\\
06099 Halle (Saale)\\
Germany\\
nils.waterstraat@mathematik.uni-halle.de
\end{minipage}
\end{minipage}

\end{document}